\newcommand{\bq}{\begin{equation}}
\newcommand{\eq}{\end{equation}}
\newcommand{\R}{\mathbb{R}}
\newcommand{\abs}[1]{\left\vert#1\right\vert}
\newcommand{\G}{\mathcal{G}}
\newcommand{\bO}{\mathcal{O}}
\newcommand{\Prop}{\mathcal{P}}
\newcommand{\F}{\mathcal{F}}
\algnewcommand{\LineComment}[1]{\State \(\triangleright\) #1}
\newtheorem{theorem}{Theorem}
\theoremstyle{lemma}
\theoremstyle{remark}
\newtheorem{remark}{Remark}
\begin{document}

\title[Fast Sweeping Methods]{Fast Sweeping Methods for Hyperbolic Systems of Conservation Laws at Steady State}

\author{Bj\"orn Engquist}
\thanks{Bj\"orn Engquist, Department of Mathematics and ICES, The University of Texas at Austin, 1 University Station C1200, Austin, TX 78712
USA (engquist@math.utexas.edu).  This author was partially supported by NSF DMS-1217203.}

\author{Brittany D. Froese}
\thanks{Brittany D. Froese, Department of Mathematics and ICES, The University of Texas at Austin, 1 University Station C1200, Austin, TX 78712
USA (bfroese@math.utexas.edu).  This author was partially supported by an NSERC PDF}

\author{Yen-Hsi Richard Tsai}
\thanks{Yen-Hsi Richard Tsai, Department of Mathematics and ICES, The University of Texas at Austin, 1 University Station C1200, Austin, TX 78712
USA (ytsai@math.utexas.edu).  This author was partially supported by NSF DMS-1217203 and a Moncrief Grand Challenge Award.}

\begin{abstract}
Fast sweeping methods have become a useful tool for computing the solutions of static Hamilton-Jacobi equations.  By adapting the main idea behind these methods, we describe a new approach for computing steady state solutions to systems of conservation laws.  By exploiting the flow of information along characteristics, these fast sweeping methods can compute solutions very efficiently.  Furthermore, the methods capture shocks sharply by directly imposing the Rankine-Hugoniot shock conditions.  We present convergence analysis and numerics for several one- and two-dimensional examples to illustrate the use and advantages of this approach.
\end{abstract}

\date{\today}

\maketitle

\section{Introduction}\label{sec:intro}

The numerical solution of systems of conservation laws,
\bq\label{eq:system}
\begin{cases}
 U_t + \nabla \cdot F(U) = a(U,x), & x\in\Omega, t>0\\
 U = U_0(x), &  x\in\Omega, t=0\\
 B(U,x) = 0, & x\in\partial\Omega, t>0
\end{cases}
\eq
has continued to be an important problem in numerical analysis.  A major challenge associated with this task is the need to compute non-classical solutions~\cite{CourantFriedrichs}, which leads to the need to develop numerical schemes that correctly resolve discontinuities in weak (entropy) solutions.  Several different approaches are now available for resolving shock fronts including front tracking schemes~\cite{Glimm_FrontTracking}, upstream-centered schemes for conservation laws (MUSCL)~\cite{Colella_MUSCL,vanLeer}, central schemes~\cite{JiangTadmor_CentralMultiD,NessyahuTadmor_Central}, essentially non-oscillatory (ENO) schemes~\cite{HartenENO}, and weighted essentially non-oscillatory (WENO) schemes~\cite{LiuOsherChan_WENO,OsherShu_ENO,Shu_ENO}. 

In many applications, it is important to compute the steady state solution of~\eqref{eq:system}, which can be viewed as a particular solution of the boundary value problem
\bq\label{eq:systemSteady}
\begin{cases}
 \nabla \cdot F(U) = a(U,x), & x\in\Omega\\
 B(U,x) = 0, & x\in\partial\Omega.
\end{cases}
\eq

A natural approach to computing steady state solutions is to use an explicit time stepping or pseudo time stepping technique to evolve the system to steady state~\cite{AbgrallMezine_Steady,AbgrallRoe,ChouShuWENO,JiangShuWENO}.  However, the computational efficiency of these schemes is restricted by a CFL condition and the need to evolve the system for a substantial time in order to reach the steady state solution.  In order to substantially improve the efficiency of these computations, it is desirable to develop methods that solve the steady state equations directly instead of through a time-evolution process.

Early work in this direction used Newton's method to solve a discrete version of the boundary value problem~\eqref{eq:systemSteady} using shock tracking techniques~\cite{GustafssonWahlund_SteadyFlow,ShubinStephens_SteadyShock}.
More recently, Newton solvers have been applied to WENO approximations of the steady Euler equations~\cite{HuLiTang_Newton}.  For more general systems, a Gauss-Seidel scheme based on a Lax-Friedrichs discretisation of the steady state equations was described in~\cite{Chen_LFSweeping}.  In~\cite{Shu_WENOHomotopy}, a homotopy approach was introduced to evolve from an initial condition to a steady state solution without the restriction of a CFL condition.

To gain inspiration, we look at some of the techniques that have been developed for solving static Hamilton-Jacobi equations.  Many of the methods commonly used for these equations rely on the fact that information propagates along characteristics.  Fast marching methods~\cite{HelmsenPuckett, Sethian_FastMarching} use fast sorting techniques to order the grid points in a way that allows the solution to be computed with a single pass through the computational domain.  This approach requires strong assumptions on the monotonicity of the solution with respect to the stencil used, which makes it difficult to apply to problems with anisotropy.  Related to this approach are ordered upwind methods~\cite{SethianVladimirsky_OUM}, which use an optimal control formulation to produce a single-pass solution method.  Fast sweeping methods~\cite{KaoOsherTsai_Sweeping,TsaiChenOshwerZhao_Sweeping,ZhaoSweeping} were introduced to avoid the complexity arising from the sorting procedure required by single-pass methods.  Fast sweeping methods, which also make use of the propagation of information along characteristics, involve updating solution values by passing through the computational domain in several pre-determined sweeping directions.  This typically leads to algorithms with linear computational complexity.

We introduce a new computational approach for steady state conservation laws that is based on the spirit of the fast sweeping methods.
Our fast sweeping approach has a number of advantages.  The most immediate advantage is the low computational cost, which is optimal, $\bO(N)$, where $N$ is the number of unknowns in the representation of the solution.  Secondly, the methods compute shocks sharply by directly imposing the Rankine-Hugoniot shock conditions, together with appropriate entropy conditions.  The methods are also flexible in the sense that they can be combined with any reasonable numerical approximation of the flux functions; in fact, in many cases it is possible to obtain correct shock locations using non-conservative schemes.  In particular, this allows the easy use of higher order approximation schemes.  In some situations, a system of conservation laws (with reasonable boundary conditions) will not have a unique steady state solution; our fast sweeping methods can be used to compute multiple steady states when necessary~(\S\ref{sec:isentropic2}).  Even entropy shocks that are unstable when embedded in time evolution processes, and therefore cannot be computed using time-stepping based methods, are accessible to our method~(\S\ref{sec:isentropic2}).  Finally, we note that different types of boundary conditions are appropriate in different settings, and some components of the solution vector may not be explicitly given on the entire boundary.  However, our methods are powerful enough to solve steady state problems that are well-posed with these ``incomplete'' boundary conditions; we do not require the problem to be overdetermined through specification of all solution components at the boundary~(\S\ref{sec:nozzle}).

The details of the methods will be given in the following sections.  Here we simply point out the two main steps.
\begin{enumerate}
\item Solution branches are generated by means of an update formula that is used to update the solution along different sweeping directions.  In one dimension, for example, one solution branch is obtained by sweeping through the domain from left to right, and another is obtained by sweeping from right to left.  In higher dimensions, more sweeping directions are typically employed.  When an incomplete set of boundary conditions is given, unknown components of the solution vector at the boundary must be supplied.  These are determined via an iteration between steps~(1) and~(2).
\item A selection principle is used to determine which solution branch is active at each point.  
In the case of nonlinear conservation laws, the Rankine-Hugoniot conditions for a stationary shock provide a set of equations that determines the shock location and any missing boundary conditions.  The numerical algorithm for solving this set of equations guides the iteration.  Entropy conditions are also applied to verify the validity of the shock.
\end{enumerate}

\section{Background}\label{sec:background}
Before we provide the details of our fast sweeping method, we provide some background material that will inform the approach taken in this work.

\subsection{Sweeping Methods}\label{sec:sweeping}
The methods we describe here are motivated by the fast sweeping methods for the solution of static Hamilton-Jacobi equations.  Fast sweeping methods rely on the fact that boundary data will propagate into the domain along characteristic directions.  

We illustrate the basic principles of fast sweeping methods by considering the simple one-dimensional Hamilton-Jacobi equation
\bq\label{eq:eikonal}
\begin{cases}
({u_x})^2 = u, & 0<x<1\\
u = 1, & x= 0, 1.
\end{cases}
\eq

We can make a few observations about this boundary-value problem.  First of all, no smooth (that is, $C^1$) solution exists; instead, we are interested in the viscosity solution of the equation~\cite{CrandallLions_ViscHJ}.  
We also observe that this equation can be formulated as an optimal control problem~\cite{Bardi_HJB}.  In particular, we can rewrite it as a Hamilton-Jacobi-Bellman equation
\[ \max\{u_x - \sqrt{u}, -u_x - \sqrt{u}\} = 0. \]

To sweep from the left, we solve the ODE
\[
\begin{cases}
u_x - \sqrt{u} = 0, & x>0\\
u = 1, & x = 0,
\end{cases}
\]
which gives us the left solution branch
\[ u_-(x) = \frac{1}{4}x^2+x+1. \]
Similarly, we can sweep from the right to obtain the right solution branch
\[ u_+(x) = \frac{1}{4}x^2-\frac{3}{2}x+\frac{9}{4}. \]
Once these solution branches have been generated, we match them using the selection principle
\bq\label{eq:eikonalSol} u(x) = \min\left\{u_-(x),u_+(x)\right\} = \min\left\{\frac{1}{4}x^2+x+1,\frac{1}{4}x^2-\frac{3}{2}x+\frac{9}{4}\right\}, \eq
which is a consequence of the optimal control formulation.

We can make a connection with a simple one-dimensional scalar conservation law by differentiating the Hamilton-Jacobi equation with respect to $x$ and defining the variable $v = u_x$.  This gives us the one-dimensional Burger's equation
\[ \left(v^2\right)_x = v. \]
By referring to the original Hamilton-Jacobi equation, together with the definition of $v$, we can also obtain the boundary conditions
\[ v(0) = 1, \quad v(1) = -1. \]
Solving the conservation law from the left and right boundaries respectively, we obtain the solution branches
\[ v^-(x) = \frac{1}{2}x+1, \quad v^+(x) = \frac{1}{2}x-\frac{3}{2}. \]

Even in the simple setting of this scalar one-dimensional conservation law, there is no direct generalisation of the selection principle that we used for the Hamilton-Jacobi equation.  
In addition, systems of conservation laws and multi-dimensional problems do not share the same link with Hamilton-Jacobi equations.  Nevertheless, the efficacy of fast sweeping methods motivates us to consider alternative selection principles that will allow us to use a similar approach for solving systems of conservation laws.

We will provide details about the proposed selection principle beginning in~\S\ref{sec:shock1d}.  For now, we simply state that the Rankine-Hugoniot condition that must hold at a shock in a stationary solution of Burger's equation is
\[ (v^{-}(x))^2 = (v^+(x))^2. \]
In the example we consider here, this equation has the solution $x = \frac{1}{2}$ and the entropy solution of the conservation law is
\[ v(x) = \begin{cases}
\frac{1}{2}x+1, &0 < x < \frac{1}{2}\\
\frac{1}{2}x-\frac{3}{2}, & \frac{1}{2}<x<1.
\end{cases} \]

\subsection{Shock Conditions: One Dimension}\label{sec:shock1d}
For systems of conservation laws, the weak solutions need not be continuous.  In general, we can expect the different solution branches to meet in a shock.  Whatever selection principle is used must satisfy the appropriate conservation conditions, which are equivalent to the Rankine-Hugoniot shock conditions.  It is well known that the Rankine-Hugoniot conditions alone may not be sufficient for describing a valid shock, and an additional entropy condition must also be verified.

We begin by considering the selection principle for steady state solutions of the  one-dimensional system
\bq\label{system1d}
U_t + f(U)_x = a(U,x).
\eq

In one-dimension, the Rankine-Hugoniot conditions give a condition for the shock speed $s$:
\[ s[[U]] = [[f(U)]]. \]
Here, we use
\[ [[v]] = v_+ - v_- \]
to denote the jump in a quantity across a shock.

Since we are concerned with steady state solutions, we are only interested in computing stationary shocks; that is, the shock speed should vanish.  The selection principle that can be used to determine a valid shock location thus becomes
\bq\label{eq:RH_1D}
[[f(U)]] = 0.
\eq

In addition to this condition on stationary shocks, we also require that characteristics are entering rather than emanating from the shock; this is the entropy condition.  To describe the Lax entropy condition, we first need to recall that for a hyperbolic system, the Jacobian of the flux $\nabla f(U)$ has real eigenvalues,
\[ \lambda_1 < \lambda_2 < \ldots < \lambda_n.\]
A stationary shock in the $k^{th}$ characteristic field is required to satisfy the Lax shock conditions~\cite{Lax}:
\bq\label{eq:entropy}
\begin{split}
\lambda_k(U_+) &< 0 < \lambda_k(U_-)\\
\lambda_{k-1}(U_-) &< 0 < \lambda_{k+1}(U_+).
\end{split}
\eq

\subsection{Shock Conditions: Two Dimensions}\label{sec:shock2d}
Next we consider steady state solutions of the two-dimensional system
\bq\label{eq:system2d}
U_t + f(U)_x + g(U)_y = a(U,x).
\eq

In two dimensions, a shock occurs along a curve instead of at a point.  Now the one-dimensional conditions will be applied in the direction $n$ normal to the curve.  Thus the Rankine-Hugoniot conditions for a stationary shock are
\bq\label{eq:RH_2D}
n \cdot\left([[f]],[[g]]\right) = 0.
\eq

In the case of two-dimensional scalar equations, we again require that characteristics are directed in towards the shock.  If we suppose that the solutions on either side of the shock are given by $U_-$ and $U_+$, and that the normal vector $n$ is directed towards the positive side of the shock (where $U=U_+$), then the entropy condition~\cite{ZhengBook} becomes
\bq\label{eq:entropy2D} 
n \cdot \left(f'(U_+),g'(U_+)\right) < 0 < n \cdot \left(f'(U_-),g'(U_-)\right).
\eq


\section{One-Dimensional Problems}\label{sec:1d}
We begin by describing our sweeping approach for obtaining steady state solutions of the one-dimensional system of conservation laws,
\[ U_t + f(U)_x = a(U,x), \quad x_L < x < x_R, \]
together with appropriate boundary conditions.

After describing the assumptions we make on the data, we will use several examples to describe our sweeping approach.  A more general discussion of one-dimensional problems will be given in~\S\ref{sec:structure}.

\subsection{Assumptions}\label{sec:assumptions}
In the simplest setting, the boundary conditions
\[ B(U,x) = 0, \quad x\in\partial\Omega \]
can be inverted so that all components of the solution vector $U$ are explicitly prescribed on the boundary.
However, in many situations this will lead to an overdetermined (and likely ill-posed) problem.

To determine how many boundary conditions are really needed, we need to look at the orientation of the characteristic fields at the boundary points.  Recall that the signs of the eigenvalues of $\nabla f$,
\[ \lambda_1 < \lambda_2 < \ldots < \lambda_n, \]
determine whether information is traveling from left to right or from right to left along the corresponding characteristic.  Thus we expect that on the left boundary, the number of positive eigenvalues should correspond to the number of components of $U$ that are being propagated into the domain, which should in turn correspond to the number of boundary conditions given on the left side of the domain.  Similarly, at the right boundary point, we will assume that the number of boundary conditions is equal to the number of negative eigenvalues.

To make this more concrete, we suppose that on the left side of the domain the first $I$ eigenvalues are negative,
\[ \lambda_1< \lambda_2 < \ldots < \lambda_I < 0 < \lambda_{I+1} < \ldots < \lambda_n, \quad x = x_L. \]
Then we assume that the boundary condition
\[ B_L(U) = 0, \quad x = x_L \]
determines $n-I$ components of $U$ at the left boundary.  That is, 
\[ U = U_L^{\alpha_1,\ldots,\alpha_I}, \quad x = x_L \]
has $I$ degrees of freedom in the form of the unknown parameters $\alpha_1,\ldots,\alpha_I$.  Similarly, if the first $J$ eigenvalues are negative at the right boundary,
\[ \lambda_1< \lambda_2 < \ldots < \lambda_J < 0 < \lambda_{J+1} < \ldots < \lambda_n, \quad x = x_R \]
then the boundary condition
\[ B_R(U) = \left(\begin{tabular}{c}$B_R^1(U)$\\ \vdots \\$B_R^J(U)$\end{tabular}\right) = 0, \quad x = x_R \]
should provide $J$ conditions at the right boundary.

Furthermore, we will primarily focus our attention on problems with steady state solutions that contain a single shock in the interior of the domain.  However, the approach we describe can also be generalised to problems with multiple stationary shocks using the reasoning in~\S\ref{sec:structure}.

\subsection{Generating a Solution Branch}\label{sec:branch1d}
In the overview of sweeping methods given so far~(\S\ref{sec:sweeping}), we suggested a technique of sweeping solutions in from the boundaries and using the Rankine-Hugoniot condition to select the appropriate solution branch.  This is a good strategy in multi-dimensions and for one-dimensional scalar problems. For one-dimensional systems, however, it is often preferable to modify this technique by just sweeping in one dimension.

In this variant of the method, sweeping is done starting at the side of the domain that has the most boundary conditions.  Naturally, a similar procedure could be used to solve from right to left instead.  In the interior of the domain, the Rankine-Hugoniot conditions are used to switch between smooth solution branches.  The given boundary data at the far side of the domain is used to determine the correct shock location.

Whichever form of sweeping we use, we require a procedure for computing a smooth solution branch starting either at a boundary point or a shock.  We describe the procedure for sweeping from left to right; sweeping from right to left is similar.  If we are given full boundary conditions $U_L$ at the left boundary point $x_L$, we can propagate these into the domain by solving the problem
\bq\label{eq:ODEs}
\begin{cases}  
f(U)_x = a(U,x), & x_L < x \leq x_R\\
U = U_L, & x = x_L.
\end{cases}
\eq
As long as the flux $f(U)$ is locally invertible, this is equivalent to solving the system of ODEs
\[\begin{cases}  
V_x = a(f^{-1}(V),x), & x_L < x \leq x_R\\
V = f(U_L), & x = x_L
\end{cases}\]
with $U = f^{-1}(V)$.

These ODEs can be solved using any suitable method.  In the computations that follow, we simply use forward Euler.  The flux function is easily inverted using a Newton step.  Recall that we are sweeping from left to right to generate a continuous solution branch.  Thus if we want to solve
\[ f(U_j) = V_j \]
for $U_j$ at the grid points $x_j$, we can initialise the Newton solver with the neighbouring value $U_{j-1}$.  

\subsubsection{Isentropic flow through a duct (left branch)}\label{sec:isentropicLeft}
To illlustrate the approach we have just described, we consider the equations for isentropic flow through a duct.
\bq\label{eq:isentropic}
\left(\begin{tabular}{c}$\rho$\\$m$\end{tabular}\right)_t + \left(\begin{tabular}{c}$m$\\$\frac{m^2}{\rho}+\kappa \rho^\gamma$\end{tabular}\right)_x = \left(\begin{tabular}{c}$-\frac{A'(x)}{A(x)}m$\\$-\frac{A'(x)}{A(x)}\frac{m^2}{\rho}$\end{tabular}\right).
\eq
The eigenvalues of this system are 
\[ u \pm c = \frac{m}{\rho} \pm \sqrt{\kappa\gamma\rho^{\gamma-1}}.\]

We choose the constants $\gamma=1.4$ and $\kappa = 1$ and 
we let the cross-sectional area of the duct be given by
\[ A(x) = -\frac{2}{5}\cos(\pi x)+\frac{6}{5}. \]

We further consider the situation of left to right flow that is supersonic at the left boundary $x=0$ and subsonic at the right boundary $x=1$.  This means that the eigenvalues will satisfy
\begin{align}
 0<\lambda_1<\lambda_2, \quad x=0,\label{eq:eigLeft} \\
\lambda_1<0<\lambda_2, \quad x = 1.\label{eq:eigRight}
\end{align}

  The given boundary conditions are
\[ m(0) = 2, \quad \rho(0) = 1, \quad \rho(1) = 2. \]

As described in this section, we can compute the smooth solution $(\rho_-,m_-)$ of the ODEs~\eqref{eq:ODEs} with initial conditions given by $m(0)$ and $\rho(0)$; this gives us a left solution branch in the region $x>0$ (Figure~\ref{fig:shock}).  Note, however, that the density does not satisfy the given boundary condition 
\[ \rho = 2, \quad x = 1 \]
on the right side of the domain.  It will be necessary to introduce a shock into the solution in order to produce a solution that satisfies all boundary conditions.

\subsection{Enforcing Shock Conditions}\label{sec:RH_1d}
As we have just seen, if we are given a boundary condition
\bq\label{eq:rightBC} B_R(U) = 0, \quad x = x_R \eq
on the right, we cannot expect to compute the correct solution by sweeping once from the left.  That is, in general we will find that
\[ B_R(U_-) \neq 0, \quad x = x_R.\]
  Instead, a shock will need to be introduced in order to ensure that all boundary conditions are satisfied.

Let us suppose first of all that we have a candidate shock location $x_S$.  Given the entropy conditions in~\eqref{eq:entropy}, we know that the shock must occur in the characteristic field corresponding to the smallest positive eigenvalue.

As discussed earlier, one option is to generate the left solution branch $U_-$ and the right solution branch $U_+$.  Then the unknown shock location $x_S$ is chosen as the point where the Rankine-Hugoniot condition is satisfied,
\[ f(U_-) = f(U_+). \]
This is a simple approach for a 1D scalar problem.  However, as per the discussion in~\S\ref{sec:assumptions}, we may not have full boundary values prescribed on both sides of the domain.  In this case, we cannot directly compute left and right solution branches.  Instead, the left solution branch will depend on $I$ unknown parameters, while the right solution branch will depend on $n-J$ unknown parameters.  The shock location is an additional unknown, which results in a need to determine $I+n-J+1$ unknowns.

A more efficient approach is to sweep in one direction only, starting from the side that has the most boundary conditions prescribed.  Throughout this paper, we will describe a left-to-right sweeping procedure, but the right-to-left procedure is analogous.  This will reduce the number of unknowns to $I+1$: the $I$ free parameters in the solution vector at $x_L$ and the location of the shock $x_S$.

We start by supposing that we have the full solution vector at the left endpoint $x_L$.  The more general setting will be considered in~\S\ref{sec:sonic}-\ref{sec:structure}.

The solution values on the left side of the shock are given by the left branch we have generated: $U_{-}(x_S)$.  We also need to determine the values $\Phi\left(U_-(x_S)\right)$ on the right side of the shock.  This is done by looking for entropy-satisfying solutions of the Rankine-Hugoniot conditions:
\bq\label{eq:jump} f\left(\Phi\left(U_-(x_S)\right)\right) = f(U_-(x_S)).\eq
We make a couple observations about the jump operator $\Phi$:
\begin{enumerate}
\item If the system~\eqref{eq:jump} has no solutions that satisfy the entropy conditions~\eqref{eq:entropy}, then $x_S$ is not an allowed shock location.
\item The entropy conditions~\eqref{eq:entropy} require the presence of a shock, so that $\Phi\left(U_-(x_S)\right)=U_-(x_S)$ is not an admissible solution of~\eqref{eq:jump}.
\end{enumerate}

Once this has been done, we can continue to propagate the solution from left to right, starting at the shock, by solving the ODEs
\bq\label{eq:ODERight}
\begin{cases}  
f(U)_x = a(U,x), & x_S < x \leq x_R\\
U = \Phi\left(U_-(x_S)\right), & x = x_S.
\end{cases}
\eq
Let us denote by $U(x; x_S)$ the solution generated if there is a shock at the point $x_S$.

For arbitrary shock locations $x_S$, we cannot expect $U(x; x_S)$ to satisfy the given boundary condition~\eqref{eq:rightBC}; see Figure~\ref{fig:shock}.
In order to determine the correct shock location, we need to make use of this boundary condition.
Thus the problem becomes to find the unknown $x_S$ such that the resulting solution of~\eqref{eq:ODERight} satisfies the equation
\bq\label{eq:rigthEqn} B_R(U(x; x_S)) = 0, \quad x = x_R. \eq
This can be done, for example, using a bisection method since the solution $U(x; x_S)$ of~\eqref{eq:ODERight} depends continuously on the value of $\Phi\left(U_-(x_S)\right)$, which in turn depends continuously on the single parameter $x_S$ through~\eqref{eq:jump}.   

If the bisection is only used to provide a starting point for a faster algorithm, such as Newton's method, the overall computational complexity of this procedure would be $\bO(N)$.

We also remark that in our computations, we solve for the shock location to within the nearest grid point.  However, if even more accurate shock tracking is desired, we could use a smaller step size in the vicinity of the shock to refine the approximation of the shock location.

\begin{figure}[htdp]
	\centering
	\subfigure[]{\includegraphics[width=.48\textwidth]{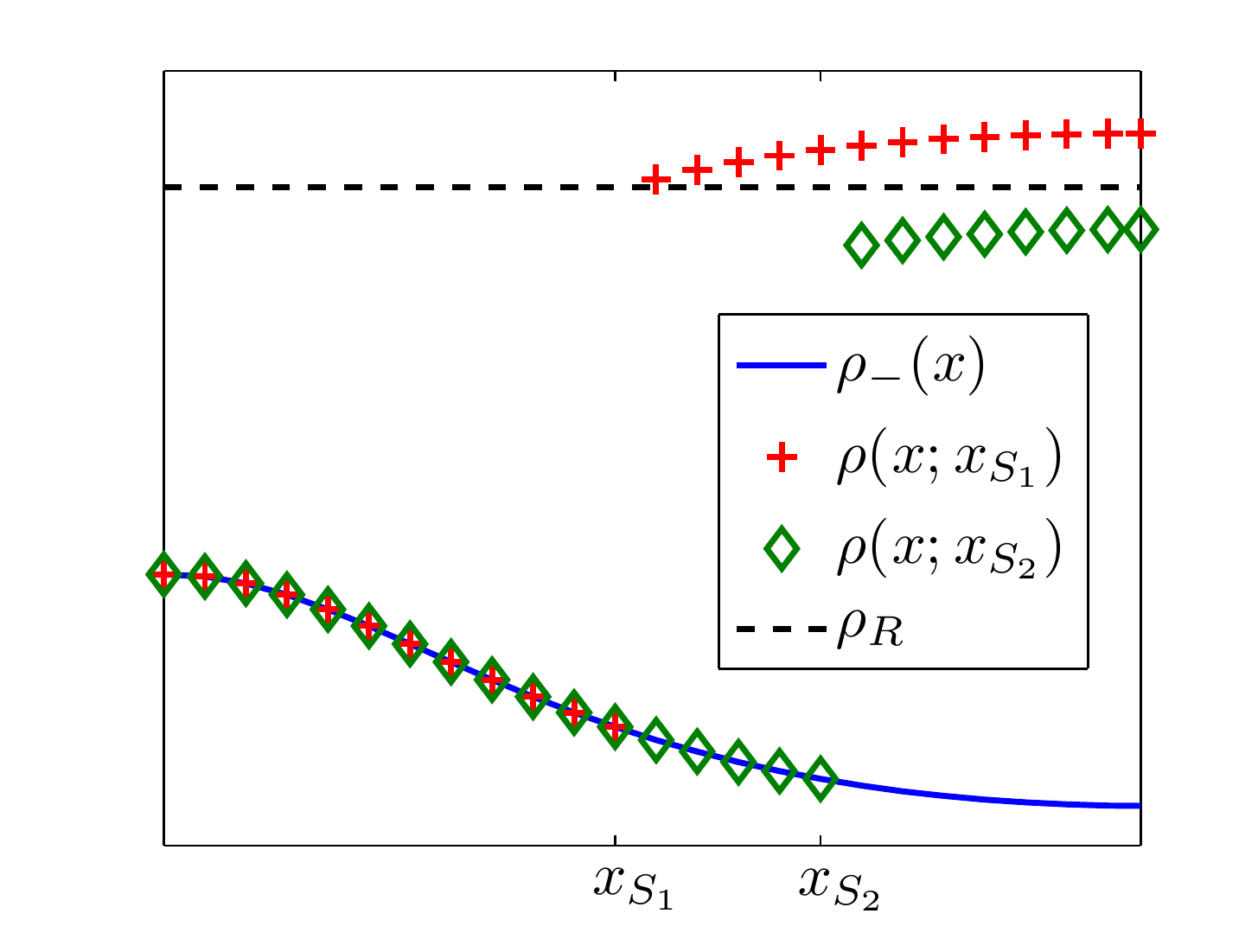}\label{fig:shock}}
	\subfigure[]{\includegraphics[width=.48\textwidth]{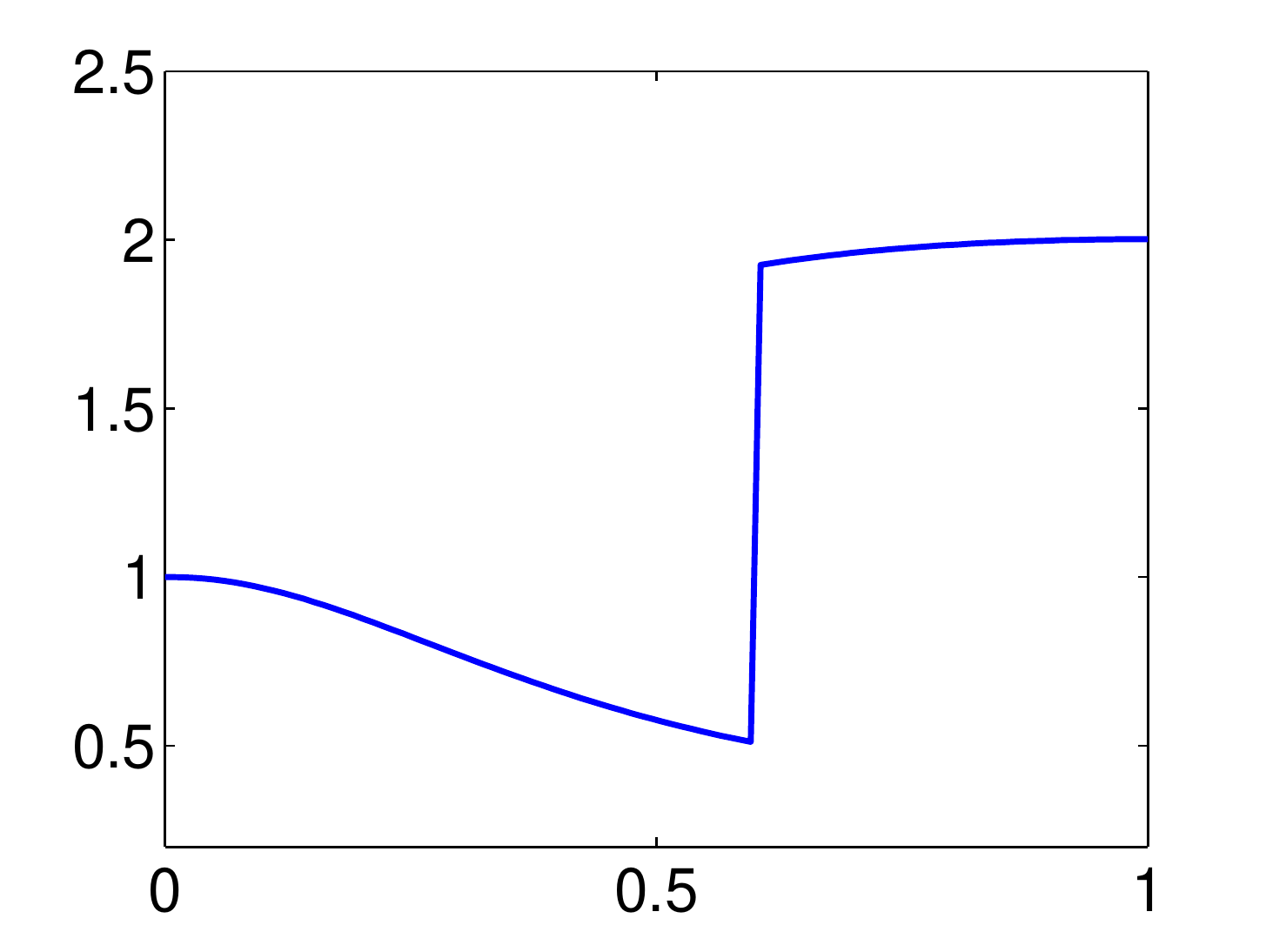}\label{fig:isentropic_rho}}
  	\caption{
  	\subref{fig:shock}~Left solution branch $\rho_-(x)$, solutions $\rho(x; x_{S_1}),\rho(x; x_{S_2})$ obtained by imposing a shock at $x_{S_1},x_{S_2}$, and the given right boundary value.  \subref{fig:isentropic_rho}~Computed density for the isentropic equations in~\S\ref{sec:isentropicLeft},~\ref{sec:isentropic1}.} 
  	\label{fig:isentropic}
\end{figure}

\subsubsection{Isentropic flow through a duct (unique solution)}\label{sec:isentropic1}
We return now to the isentropic equations~\eqref{eq:isentropic}.

As described in~\S\ref{sec:branch1d}, we can generate the smooth left solution branch.  This step only needs to performed once.

The Rankine-Hugoniot conditions at a stationary shock are
\[ m_- = m_+, \quad \frac{m_-^2}{\rho_-} + \kappa\rho_-^\gamma = \frac{m_+^2}{\rho_+} + \kappa\rho_+^\gamma. \]
Clearly, the momentum $m$ is continuous across the shock.  The second equation has 
two solutions.  One of these is $\rho_- = \rho_+$, which we discard since we are looking for a shock.  The second, desired root is easily obtained using Newton's method.

We use a bisection method to choose a shock location that enforces the condition $\rho(1)=2$.  The computed density is plotted in Figure~\ref{fig:isentropic_rho}.  We also plot the eigenvalues of $\nabla f$, which make clear that the solution is an entropy stable 1-shock.  Finally, we present computation times in Table~\ref{table:isentropic} to validate our claims about the efficiency of our approach.

\begin{table}[htdp]
\caption{Computation time using $N$ grid points for the isentropic equations in~\S\ref{sec:isentropic1}.}
\begin{center}
\begin{tabular}{c||cccccc}
N  & 64 & 128 & 256 & 512 & 1024 & 2048 \\
\hline
CPU Time (s) & 0.8 & 1.2 & 2.4 & 5.1 & 11.4 & 23.5\\
\end{tabular}
\end{center}
\label{table:isentropic}
\end{table}

\subsection{Problems with Multiple Steady States}\label{sec:multiple}
As discussed in~\cite{EmbidMajda_Mult}, conservation laws need not have unique steady state solutions.  Instead, the steady states can depend on the initial values.  We should note that it is also possible for a problem to have a valid entropy-stable steady state solution that is not time-stable and thus cannot be generated through time evolution of a conservation law.

We want our methods to generate all valid shock solutions.  This simply means that when we are choosing the shock location required to satisfy the given right boundary conditions, we should be aware of the possibility of multiple solutions.  

\subsubsection{Isentropic flow through a duct (multiple solutions)}\label{sec:isentropic2}
To illustrate this, we return to the problem of isentropic flow through a duct, which was introduced in~\S\ref{sec:isentropicLeft}.  We consider exactly the same problem, but with a new duct geometry
\[ A(x) = 1.2-0.2\cos(4\pi x). \]
This has the effect of introducing oscillation into the source term, which allows for multiple valid stationary shock locations.

We repeat the procedure of the preceding section.  As before, we generate the left branch from the data.  Next we split the domain into four sub-regions where the source term does not change sign.  We search for a shock in each of these regions, using the endpoints to initialise our bisection method.

This allows us to compute four distinct solutions; see Figure~\ref{fig:multiple}.  We also note that it appears that only the first and third of these solutions are time-stable.

\begin{figure}[htdp]
	\centering
			\subfigure[]{\includegraphics[width=.44\textwidth]{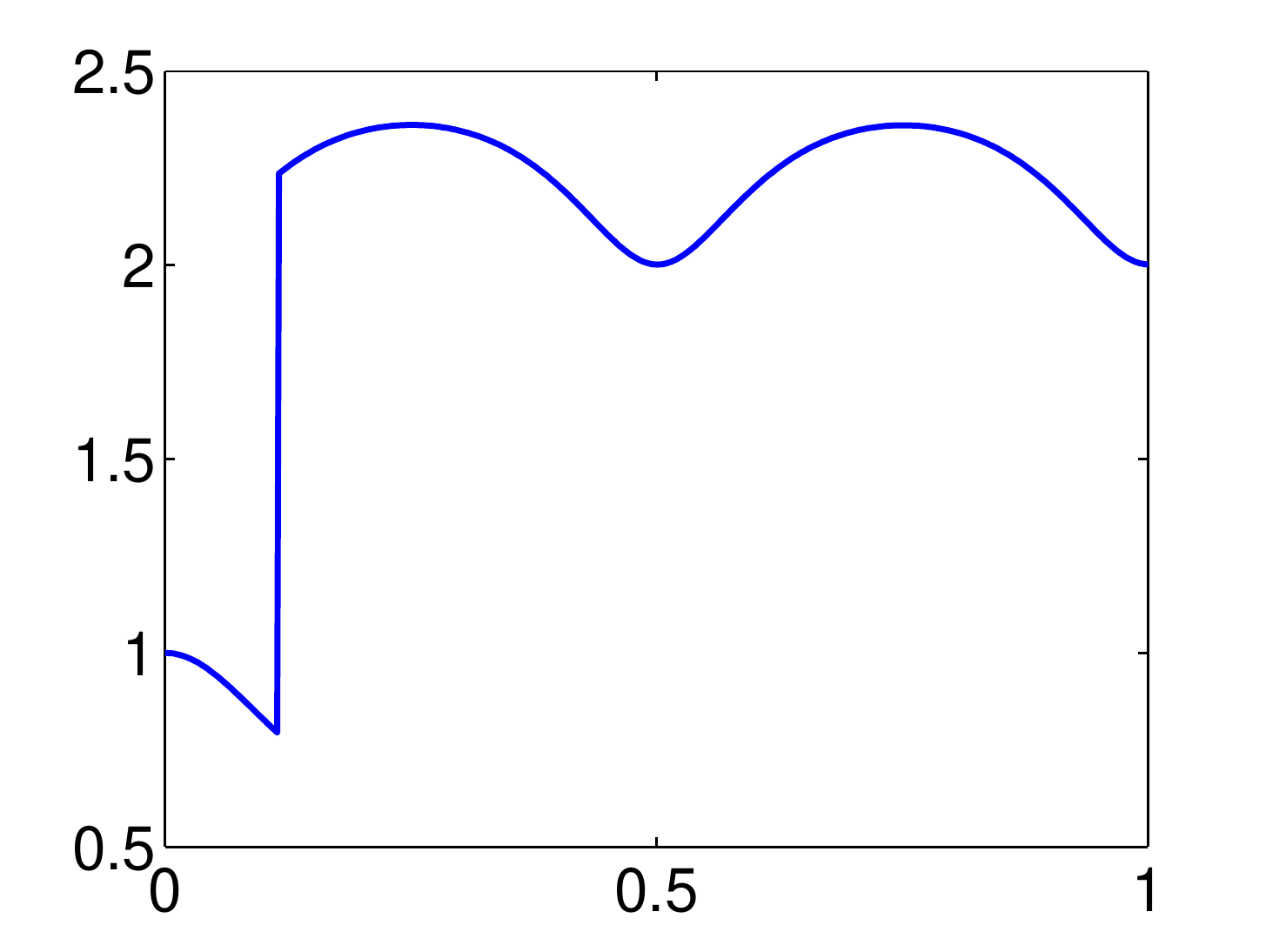}\label{fig:rho_mult1}}
       \subfigure[]{\includegraphics[width=.44\textwidth]{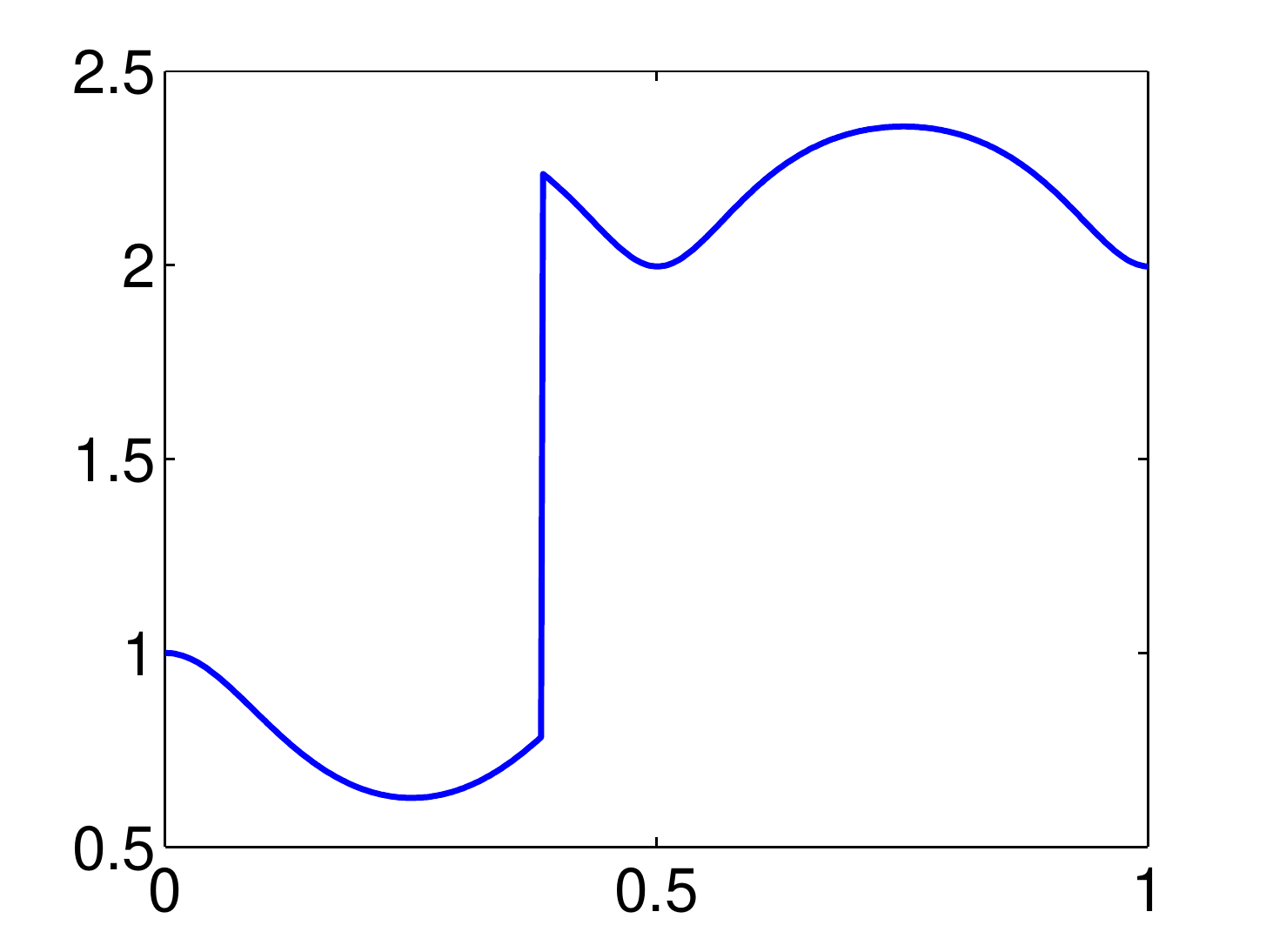}\label{fig:rho_mult2}}
       \subfigure[]{\includegraphics[width=.44\textwidth]{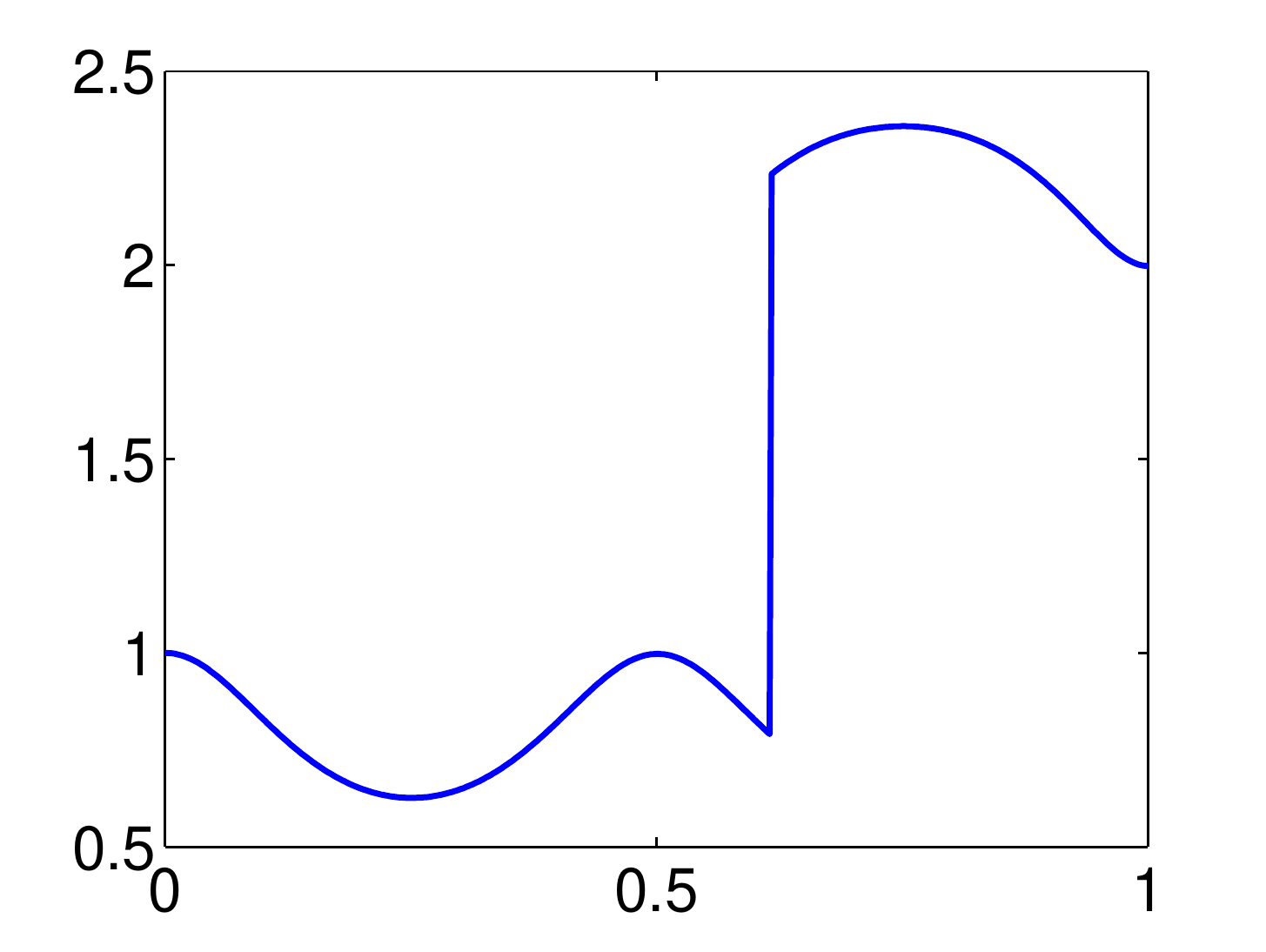}\label{fig:rho_mult3}}
       \subfigure[]{\includegraphics[width=.44\textwidth]{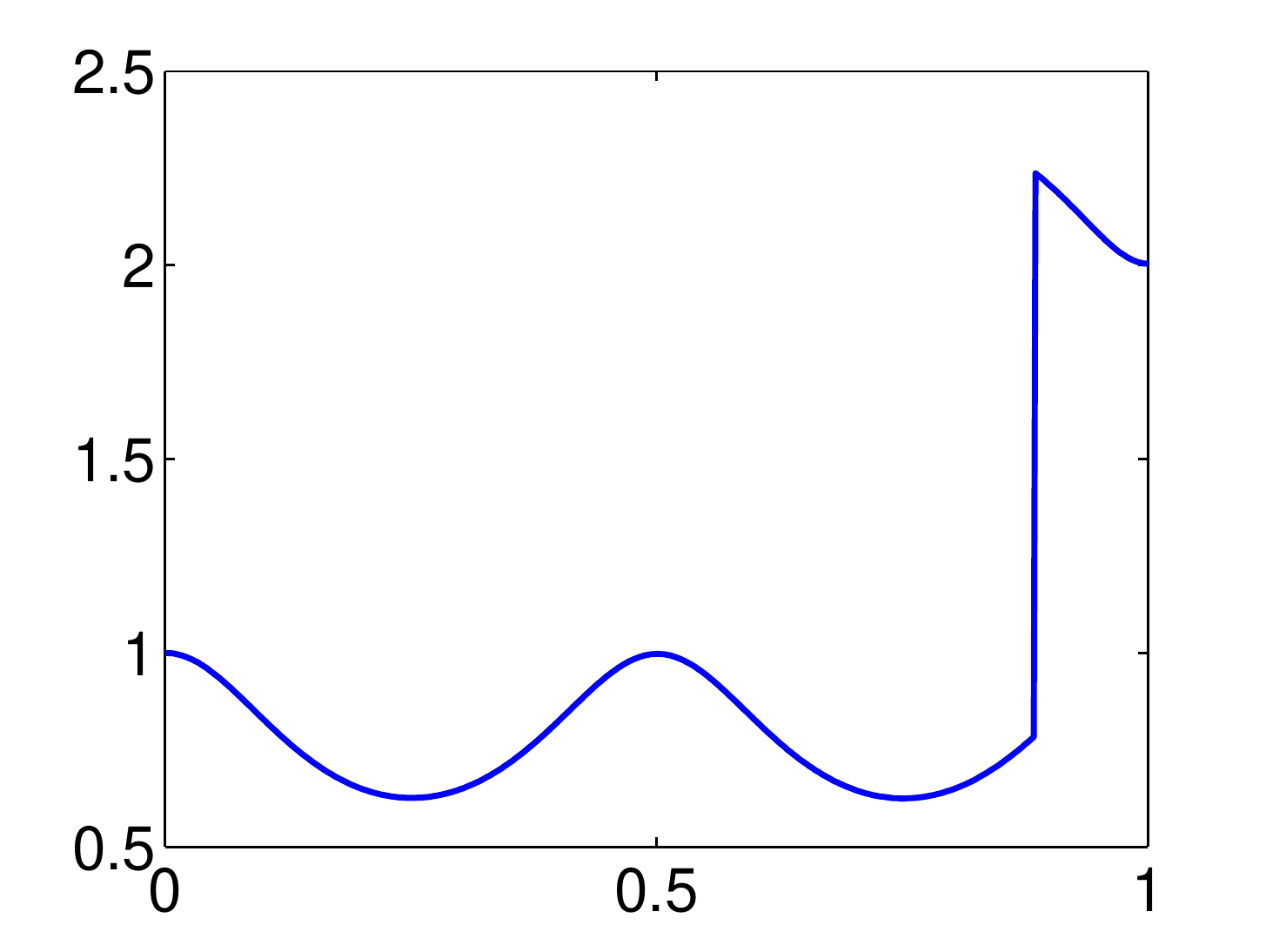}\label{fig:rho_mult4}}
  	\caption{Density for four different stationary solutions of the isentropic equations in~\S\ref{sec:isentropic2}.}
  	\label{fig:multiple}  	
\end{figure}

\subsection{Sonic Points}\label{sec:sonic}
We recall that in the sweeping procedure we described in~\S\ref{sec:branch1d}, we assumed that $\nabla f(U)$ is invertible.  This will be reasonable as long as none of the eigenvalues of $\nabla f$ vanish.  However, it is also possible for one of these eigenvalues to change sign continuously, passing through zero in the process.  As we approach this turning point $x_T$, where an eigenvalue changes sign, the system of ODEs~\eqref{eq:ODEs} becomes very stiff, and conventional ODE methods will not allow us to solve the system up to (or beyond) $x_T$. We will describe an alternative approach.

Let us consider the problem of sweeping the conservation law
\bq\label{eq:1dsteady} f(U)_x = a(U,x) \eq
from left to right, where the $i^{th}$ characteristic encounters a sonic (turning) point at some point $x_T$.

As long as we are to the left of $x_T$, we can solve this ODE using the procedure described in the previous sections.

We also want to continue to evolve the ODEs through the turning point.  To gain insight into whether or not this is possible, it is helpful to look at the linearised system
\[ \Lambda(P^{-1}U)_x \approx P^{-1}a(U,x) \]
where
\[ P^{-1}\nabla f(U_T)P = \Lambda \]
and $\Lambda$ is a diagonal matrix containing the eigenvalues of $\nabla f$.
Since the $i^{th}$ eigenvalue vanishes, it will also be necessary for the $i^{th}$ component of $P^{-1}a$ to vanish at the sonic point.

Thus we cannot hope to generate a continuous solution through the turning point unless the compatibility condition
\bq\label{eq:compatibility}
P^{-1}a(U_T,x_T) = 0
\eq
is satisfied.

Thus given full left boundary conditions (that is, all components of $U_-(x_L)$ are given as data), we cannot in general expect the resulting smooth solution branch $U_-(x)$ to satisfy this compatibility condition.  However, if we are missing the boundary condition corresponding to the $i^{th}$ characteristic field, we can use this extra degree of freedom to choose boundary conditions that will allow us to satisfy the compatibility condition at the turning point. 
That is, suppose the boundary condition
\[ B_L(U) = 0, \quad x = x_L  \]
has a one-parameter family of solutions $U_L^\alpha$.  For a given value of the parameter $\alpha$, we can solve the system of ODEs~\eqref{eq:ODEs} in the domain $x < x_L < x_T^\alpha$ to obtain a left solution branch $U_-^\alpha(x)$.  This unknown parameter is then determined by the compatibility condition~\eqref{eq:compatibility}
\[  P^{-1}a\left(U_T^\alpha,x_T^\alpha\right) = 0. \]
Formally, we have one unknown (a boundary condition), which is determined by one equation (the compatibility condition).  Thus in general, we expect that an eigenvalue could transition from negative to positive through a sonic point.

At the discrete level, we can use a conventional ODE solver to solve the system of ODES~\eqref{eq:ODEs} from $x_L$ up to a grid point $x_j<x_T$ that is near the sonic point.  Since we are approximating a smooth solution, a forward Euler formula will be valid even at the (unknown) turning point:
\[ f(U_T) - f(U_j) \approx \left(x_T-x_j\right) a(U_j,x_j). \]
We also require the $i^{th}$ eigenvalue to vanish at the turning point:
\[ \lambda_i(U_T) = 0. \]
We can use these equations to solve not only for the solution $U_T$ at the turning point, but also for the location $x_T$ of the turning point.   If we are looking at a system of $n$ conservation laws, this leads to a system of $n+1$ equations for $n+1$ unknowns, which are the turning point $x_T$ and the $n$ components of the solution $U_T$.  This system can be solved using Newton's method, with the nearby grid location $x_j$ and solution values $U_j$ providing a good initial guess.

Once we have solved the system from $x_L$ to the turning point $x_T$, we can use a backward Euler formula to obtain the solution at the next grid point:
\[f(U_{j+1}) - f(U_T) = (x_{j+1}-x_T)a(U_{j+1},x_{j+1}).\]
We can again invert these with Newton's method, but we do have to be careful to obtain the correct solution since there will be an issue of non-uniqueness near the sonic point.  We extrapolate to obtain the initial guess 
\[U_{j+1} \approx \frac{x_{j+1}-x_j}{x_T-x_j}U_T +\left(1-\frac{x_{j+1}-x_j}{x_T-x_j}\right) U_j.\]

Once this is done, we can continue to sweep this solution branch towards the right using any suitable ODE solver.

\subsubsection{Nozzle problem}\label{sec:nozzle}
To illustrate the issues surrounding sonic points and missing boundary conditions, we consider the nozzle problem.
\bq\label{eq:nozzle} 
\left(\begin{tabular}{c}$\rho A$\\ $\rho u A$\\ $EA$\end{tabular}\right)_t + \left(\begin{tabular}{c}$\rho u A$\\ $(\rho u^2 + p) A$\\ $uA(E+p)$\end{tabular}\right)_x = \left(\begin{tabular}{c}0\\$pA'(x)$\\ 0\end{tabular}\right), 
\eq
which we want to solve to steady state on the domain $x\in[0,3]$.

Here the pressure is
\[ p = (\gamma-1)\left(E-\frac{1}{2}\rho u^2\right) = \rho R T \]
and the sound speed is
\[ c = \sqrt{\gamma p/\rho}. \]
The eigenvalues of the Jacobian are $\lambda_1 = u-c$, $\lambda_2 = u$, and $\lambda_3 = u+c$.

Following~\cite{Chen_LFSweeping}, we take the cross-sectional area to be
\[ A(x) = 1+2.2(x-1.5)^2, \]
the gas constant $\gamma = 1.4$, and $R = 8.3144$.

We consider the boundary conditions 
\[ p_L = 1, \quad p_R = 0.6784, \quad T_L  = 300. \]
Given these boundary conditions, we expect that $\lambda_1<0<\lambda_2<\lambda_3$ on both sides of the domain.

Since the eigenvalues have the same signs on both sides of the domain, one possibility to consider is that the missing left boundary condition should be chosen so that the resulting (smooth) left branch satisfies the given right boundary condition.  We set this possibility aside since we are interested in producing a solution with a shock, and in illustrating the effects of sonic points.

If we consider the physically reasonable setting where the (steady) flow is from left to right ($u>0$), we can make several observations about the structure of a solution with a shock.
\begin{enumerate}
\item A stationary shock can only occur in the first characteristic field since $\lambda_1 = u-c$ is the only eigenvalue that is permitted to become negative.
\item The entropy conditions~\eqref{eq:entropy} require that $\lambda_1>0$ to the immediate left of the shock.
\item The given boundary conditions assume that $\lambda_1<0$ at $x = x_L$, the far left of the domain.
\item We conclude that the first eigenvalue $\lambda_1$ must change sign through a sonic point before a shock can occur.
\end{enumerate}

Now we want to choose the unknown boundary value in order to ensure that the compatibility condition is satisfied at the turning point, which for this problem means
\[ -\frac{(\gamma-1)u+c}{4\gamma(\gamma-1)}\rho A'(x) = 0. \]
Since we are interested in left to right flow, this is equivalent to $A'(x)=0$.

Let use denote by $U_L^\alpha$ the left boundary values, with one free parameter $\alpha$, and by $x_T^\alpha$ the location of the resulting sonic point.  We are looking for the value of $\alpha$ that ensures that
\[  A'(x_T^\alpha) = 0. \]
To solve this, we define
\[ x_*^\alpha = 
\begin{cases}
x_T^\alpha & \text{if there is a sonic point $x_T$ in the domain}\\
x_R & \text{otherwise}.
\end{cases} \]
Then we use a bisection method to solve $A'(x_*^\alpha) = 0$ for $\alpha$.

We should note that in this problem, $x_*^\alpha$ is not continuous as a function of $\alpha$.  However, the bisection scheme will still converge to a value where $A'(x_*^\alpha)$ changes sign, which is the sonic point.

Once this is done, we can generate a left solution branch $U_-(x)$ in the entire domain.

From this point, we solve for the unknown shock location as in the previous examples.  With the use of the bisection methods, the entire solution procedure requires $\bO(N\log N)$ time; this is supported by the computation times in Table~\ref{table:nozzle}.  The computed solution, as well as a reference solution obtained by evolving the time-dependent problem to steady state, are presented in Figure~\ref{fig:nozzle}.  Of particular note is the sharp shock that the sweeping method produces.

\begin{figure}[htdp]
	\centering
			\subfigure[]{\includegraphics[width=.48\textwidth]{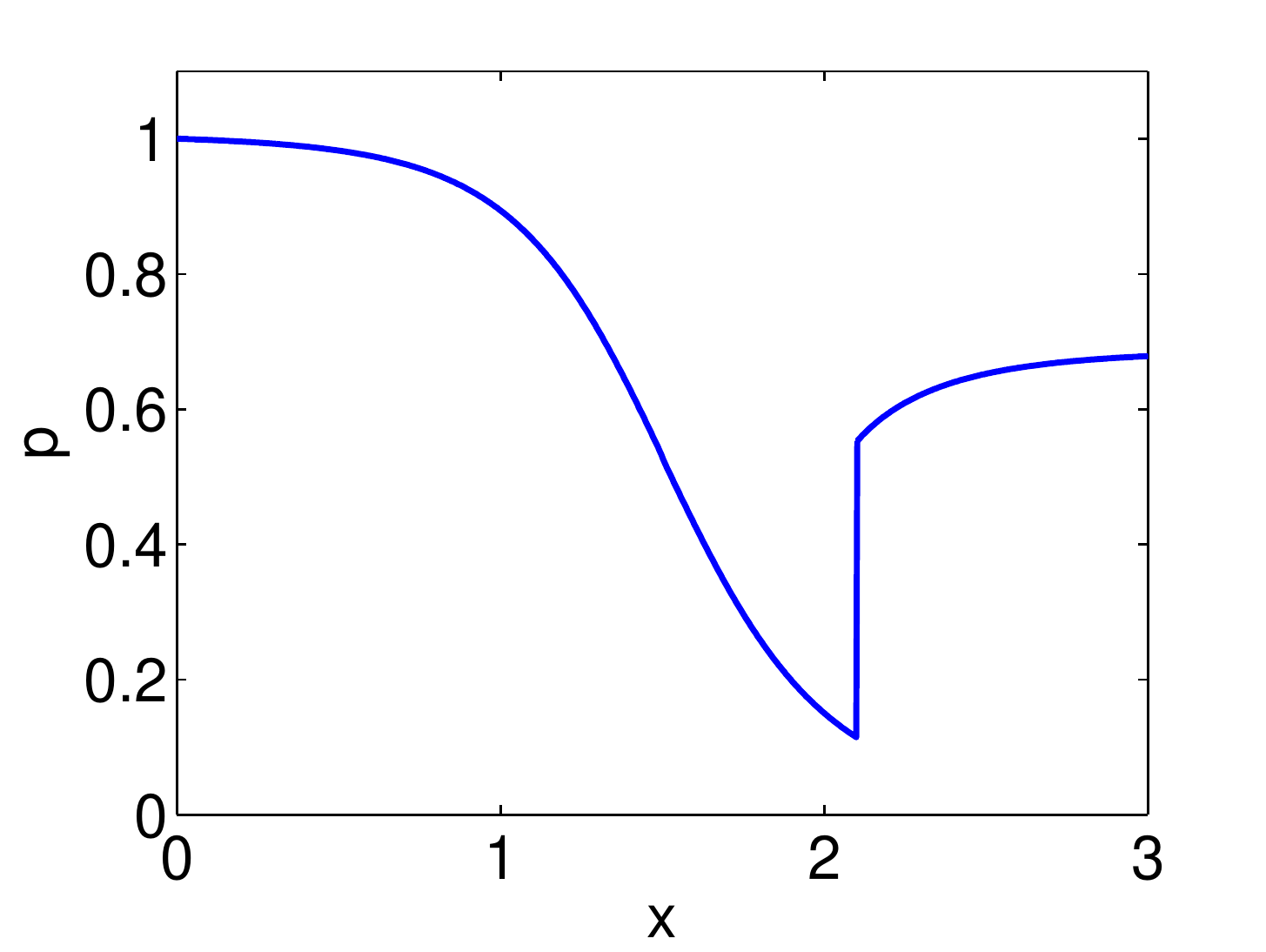}\label{fig:p_nozzle_1000}}
       \subfigure[]{\includegraphics[width=.48\textwidth]{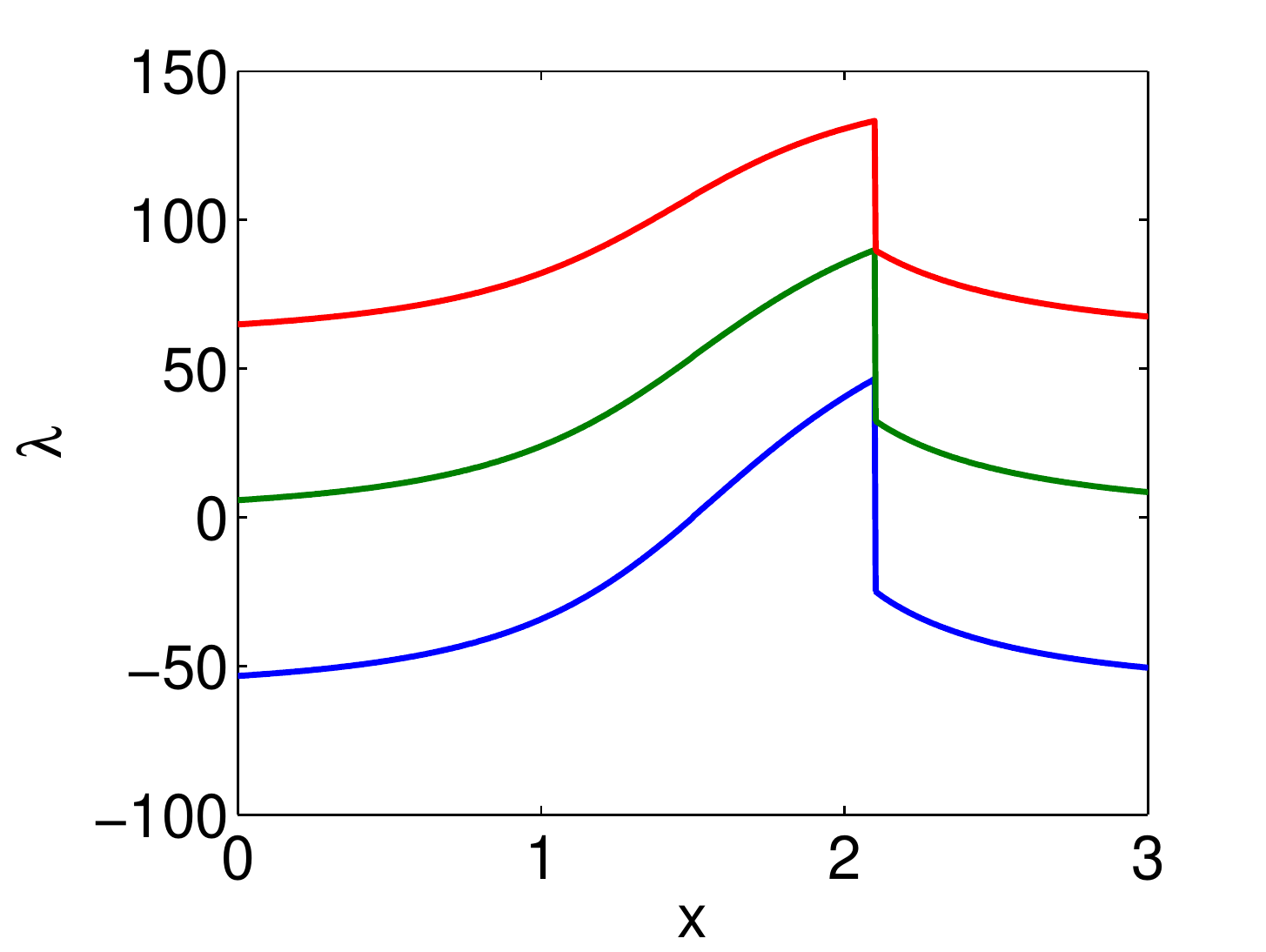}\label{fig:l_nozzle_1000}}
       \subfigure[]{\includegraphics[width=.48\textwidth]{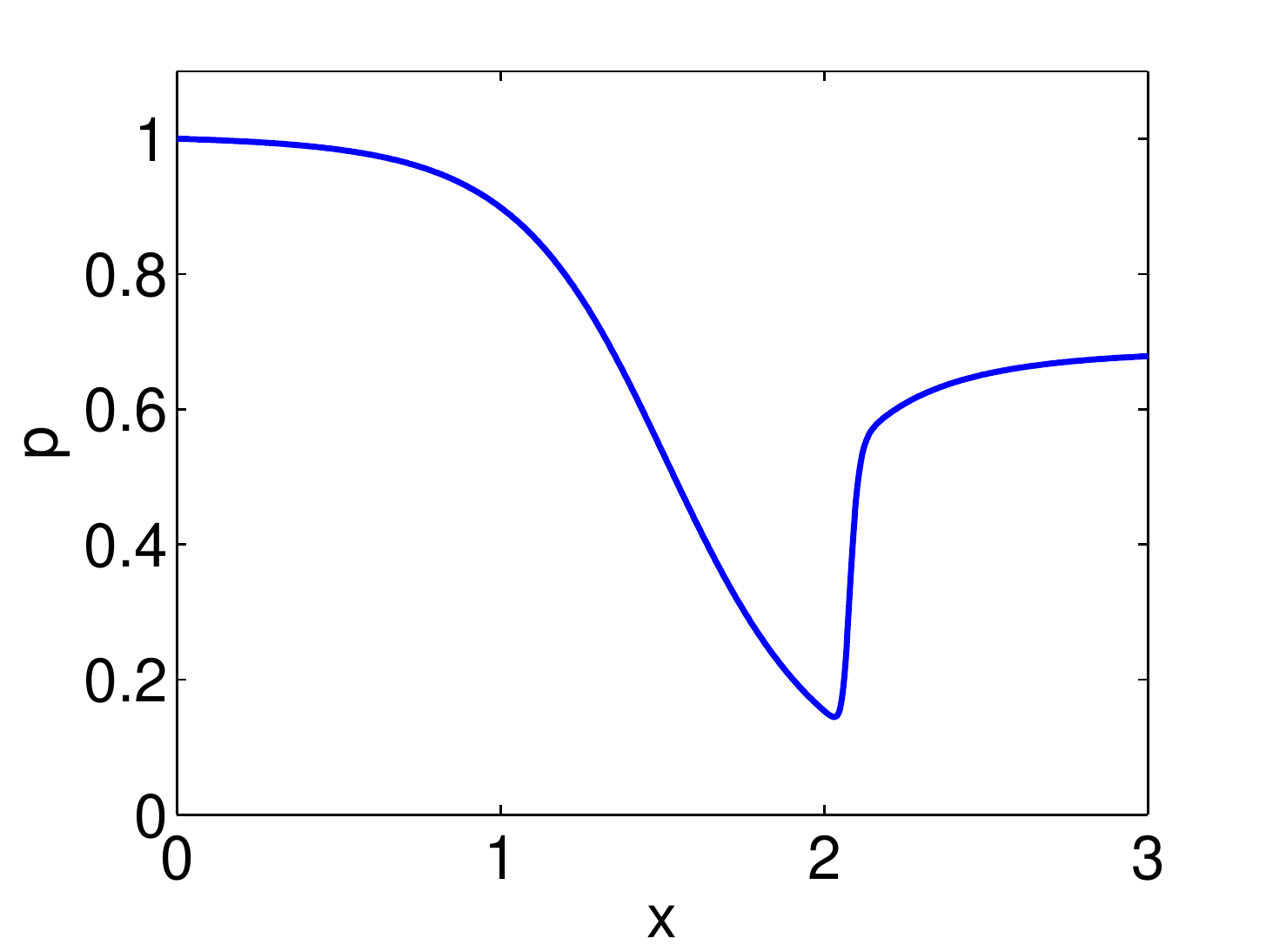}\label{fig:p_LF}}
       \subfigure[]{\includegraphics[width=.48\textwidth]{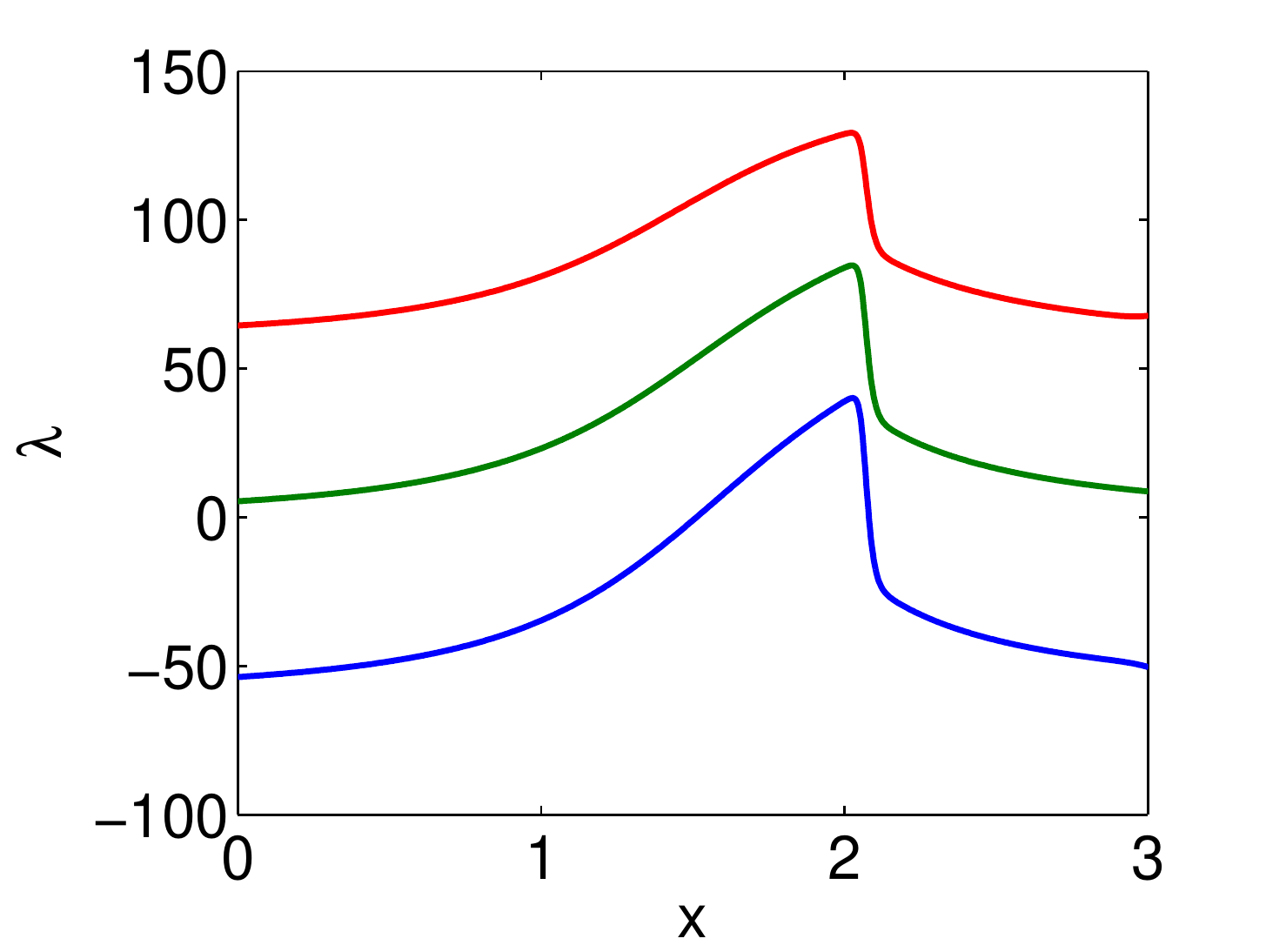}\label{fig:l_LF}}
  	\caption{Solution to the nozzle problem (pressure and eigenvalues) computed with 1000 grid points by \subref{fig:p_nozzle_1000},\subref{fig:l_nozzle_1000}~sweeping and \subref{fig:p_LF},\subref{fig:l_LF}~evolving a Lax-Friedrichs scheme to steady state.}
  	\label{fig:nozzle}  	
\end{figure} 

\begin{table}[htdp]
\caption{Computation time using $N$ grid points for the nozzle problem in~\S\ref{sec:nozzle}.}
\begin{center}
\begin{tabular}{c||cccccc}
N  & 64 & 128 & 256 & 512 & 1024 & 2048 \\
\hline
CPU Time (s) & 0.7 & 1.2 & 2.9 & 6.9 & 14.1 & 32.4\\
\end{tabular}
\end{center}
\label{table:nozzle}
\end{table}

\subsection{General Structure of Solutions Containing a Single Lax Shock}\label{sec:structure}
We have used several examples to illustrate the key ideas that are present in our fast sweeping approach.  Now we present a more systematic look at the general structure of stationary solutions to one-dimensional systems of conservation laws.

In the following discussion, we suppose that we are constructing the solution by sweeping from left to right.  Naturally, the opposite sweeping direction could be handled in a similar way.

We assume that on the left boundary, the first $I$ eigenvalues are negative, while on the right boundary, the first $J$ eigenvalues are negative.
\[ \lambda_1^L < \cdots < \lambda_I^L < 0 < \cdots < \lambda_n^L. \]
\[ \lambda_1^R < \cdots < \lambda_J^R < 0 < \cdots < \lambda_n^R. \]
We also suppose that the eigenvalues are all distinct,
\[ \lambda_i \neq \lambda_j, \quad \text{if }i \neq j. \]
Referring back to~\S\ref{sec:assumptions}, this set-up means that we have $I$ degrees of freedom on the left boundary,
\[ U = U_L^{\alpha_1,\ldots,\alpha_I}, \quad x = x_L \]
and $J$ conditions given at the right boundary,
\[ B_R(U) = \left(\begin{tabular}{c}$B_R^1(U)$\\ \vdots \\$B_R^J(U)$\end{tabular}\right) = 0, \quad x = x_R. \]
If $I\neq J$, then as we move from left to right, some of the eigenvalues will necessarily change sign.  This can happen in one of two ways:
\begin{enumerate}
\item Through a shock ($I<J$): This is the case if the $k^{th}$ eigenvalue is transitioning from positive to negative.  In this situation,  the unknown shock location $x_{S_k}$ is to be determined so that the solution matches the right boundary condition $B_R^k(U) = 0$.  At any point, the entropy conditions~\eqref{eq:entropy} ensure that only the smallest positive eigenvalue can have a shock.
\item Through a sonic (turning) point ($I>J$): This is the case if the $k^{th}$ eigenvalue is transitioning from negative to positive.  The source term must satisfy a compatibility condition for this to be possible.  In this situation, we are missing the boundary condition corresponding to this characteristic field (that is, there is an unknown parameter $\alpha_k$), but it is determined by the compatibility condition~\eqref{eq:compatibility} at the turning point $x_{T_k}$.  Since the solution is continuous through a turning point, only the largest negative eigenvalue can change sign through a turning point.
\end{enumerate}
We make a couple other observations.
\begin{enumerate}
\item The first $K \equiv min\{I,J\}$ degrees of freedom $(\alpha_1, \ldots, \alpha_K)$ may not be determined by a sonic point since the corresponding eigenvalues do not necessarily change sign in the domain.  Instead, these can be determined by the first $K$ components of the right boundary condition,
\[ B_R^1(U) = \ldots = B_R^K(U) = 0. \]
\item It is also possible for one of the other eigenvalues to change sign, as long as it changes back again.  For a positive eigenvalue, we would have an unknown shock condition determined by the compatibility condition at a subsequent sonic point.  For a negative eigenvalue, we would have an unknown left boundary condition, which is determined by the compatibility condition, followed by an unknown shock location, which is determined by the right boundary condition. 
\end{enumerate} 

We take a look at the structures required for different combinations of boundary conditions in order to obtain solutions that are continuous or have a single shock.  Similar reasoning can be used to examine the allowed structures for problems with multiple shocks.

The following discussion is quite general.  However, we note that in many cases it is possible to simplify these situations by using extra information about the problem.  For example, in the nozzle problem, the source term can only vanish at certain points that can be determined \emph{a priori} from the nozzle geometry, and sonic points are only possible at these points.  In other scenarios, physical intuition can limit the types of solutions we need to look for.

\subsubsection{Continuous solutions}

First we look at the structure required for continuous solutions.

{\bf{Case 1}}: $I<J$
\[ \lambda_1^L < \cdots < \lambda_I^L < 0 < \cdots < \lambda_J^L < \cdots < \lambda_n^L. \]
Now we see that $\lambda_{I+1},\ldots\lambda_J$ need to transition from positive to negative.  We expect that in general, this cannot be done continuously.

{\bf{Case 2}}: $I \geq J$
\[ \lambda_1^L < \cdots< \lambda_J^L \leq \cdots \leq \lambda_I^L < 0 < \cdots < \lambda_n^L. \]
In this case, $\lambda_{J+1},\ldots\lambda_I$ need to transition from negative to positive via sonic points in order from the largest to smallest eigenvalue.

Thus we will have $I$ unknowns in the form of missing boundary conditions on the left, and these will be determined by $J$ boundary conditions at right together with compatibility conditions for the turning points $x_{T_I}, x_{T_{I-1}},\ldots,x_{T_{J+1}}$.

\subsubsection{Solutions with a single shock}\label{sec:1shock}

Now we turn our attention to solutions that contain a single shock.

{\bf{Case 1}}: $I < J-1$.

In this case, we expect more than one shock using the same reasoning as Case~1 for continuous solutions.

{\bf{Case 2}}: $I = J-1$

\[ \lambda_1^L < \cdots < \lambda_I^L<0<\lambda_J^L < \cdots < \lambda_n^L. \]
Here $\lambda_J$ will transition from positive to negative via a shock.  The unknowns are $I$ left boundary conditions and one shock location.  These are determined by the $J = I+1$ right boundary conditions.  This structure is picture in Figure~\ref{fig:shock_case2}.

\begin{figure}[htdp]
	\centering
	{\includegraphics[width=0.45\textwidth,trim=0 40 0 0,clip=true]{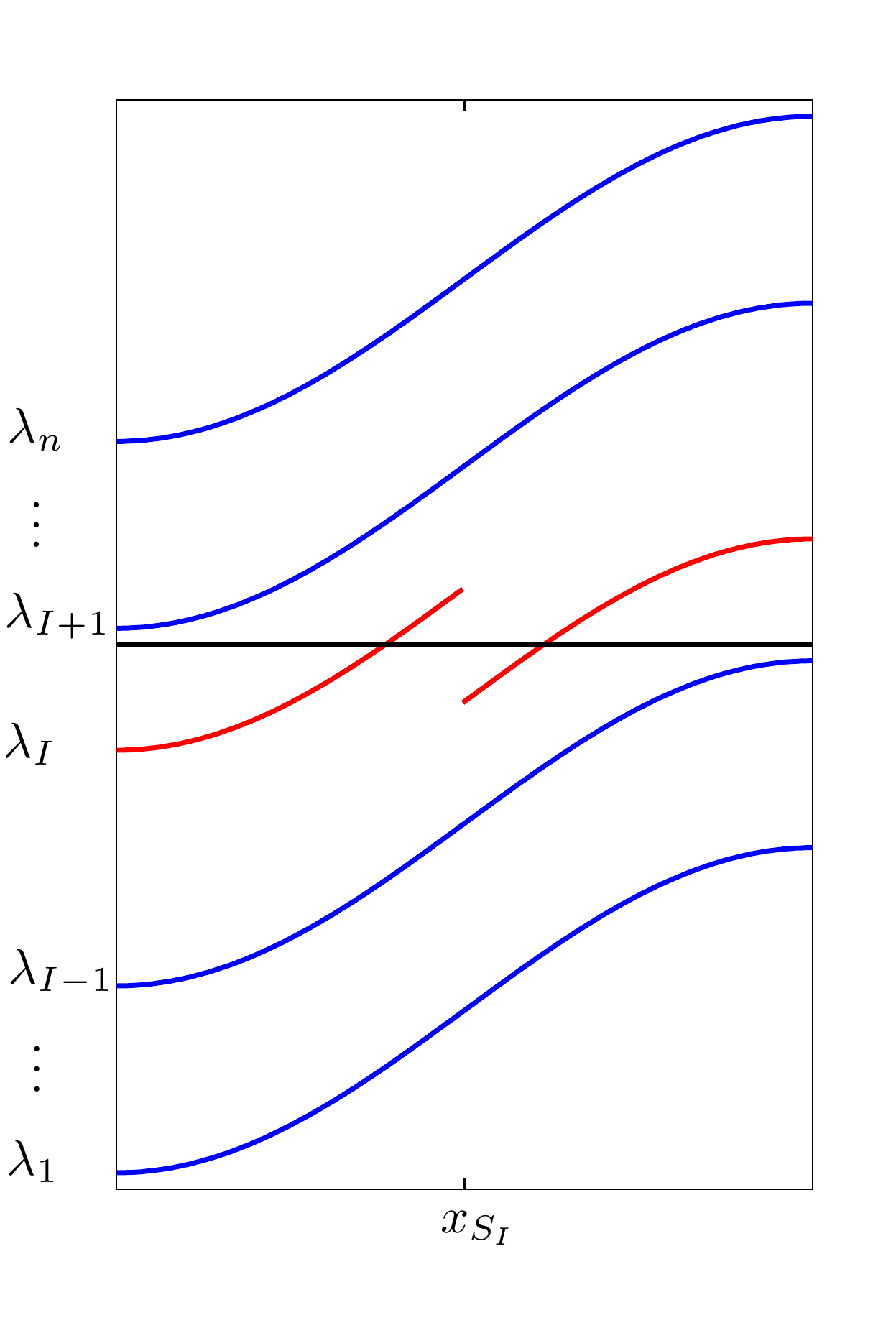}\label{fig:shock_case2}}
  	\caption{Structure of solutions from~\S\ref{sec:1shock} with $I=J-1$.}
  	\label{fig:shock_case2}  	
\end{figure} 

{\bf{Case 3}}: $I > J-1$

\[ \lambda_1^L < \cdots < \lambda_J^L\leq\ldots\leq\lambda_I^L<0<\cdots < \lambda_n^L. \]

We will require $\lambda_{J+1},\ldots, \lambda_I$ to transition from negative to positive via sonic points; these occur in order from largest to smallest eigenvalue.

On top of this basic structure, we want to introduce a shock.  We could introduce it at the far left, in $\lambda_{I+1}$, then follow it by a turning point in this same characteristic.

We could introduce the shock after the turning point $x_{T_k}$ ($I \geq k \geq J+1$):  a shock in $\lambda_k$, followed by another turning point in this field.

Finally, after the last necessary turning point $x_{T_{J+1}}$, we could introduce one more turning point $x_{T_J}$ and follow it by a shock $x_{S_J}$.

In each situation, the shock locations and missing left boundary conditions are the unknowns.  The sonic point compatibility conditions and the right boundary conditions are the equations that determine these unknowns.

For a visualisation of these permissible structure, see Figure~\ref{fig:1dstructure}.

\begin{figure}[htdp]
	\centering
	\subfigure[]{\includegraphics[width=0.45\textwidth]{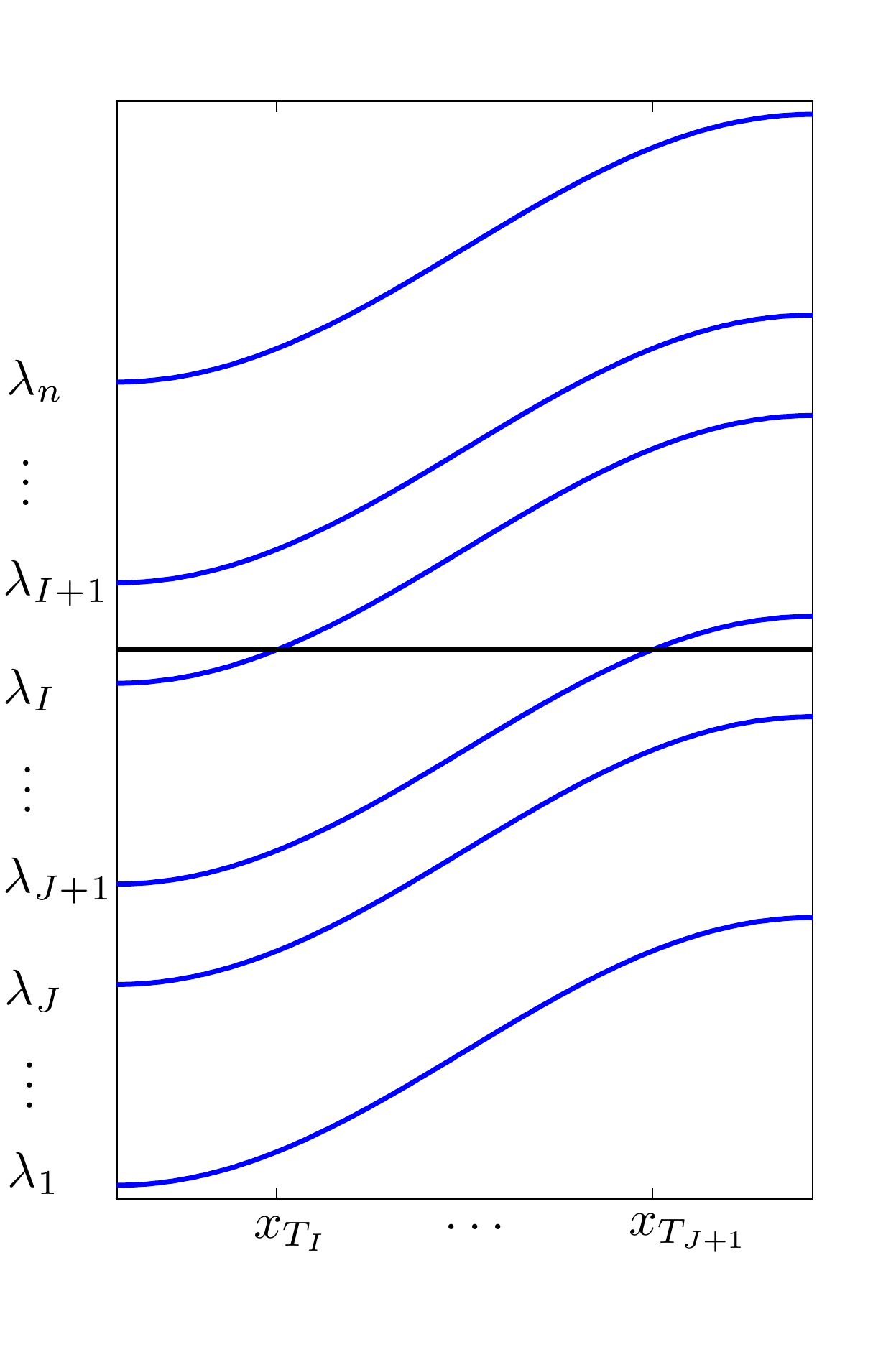}\label{fig:shock_0}}
	\subfigure[]{\includegraphics[width=0.45\textwidth]{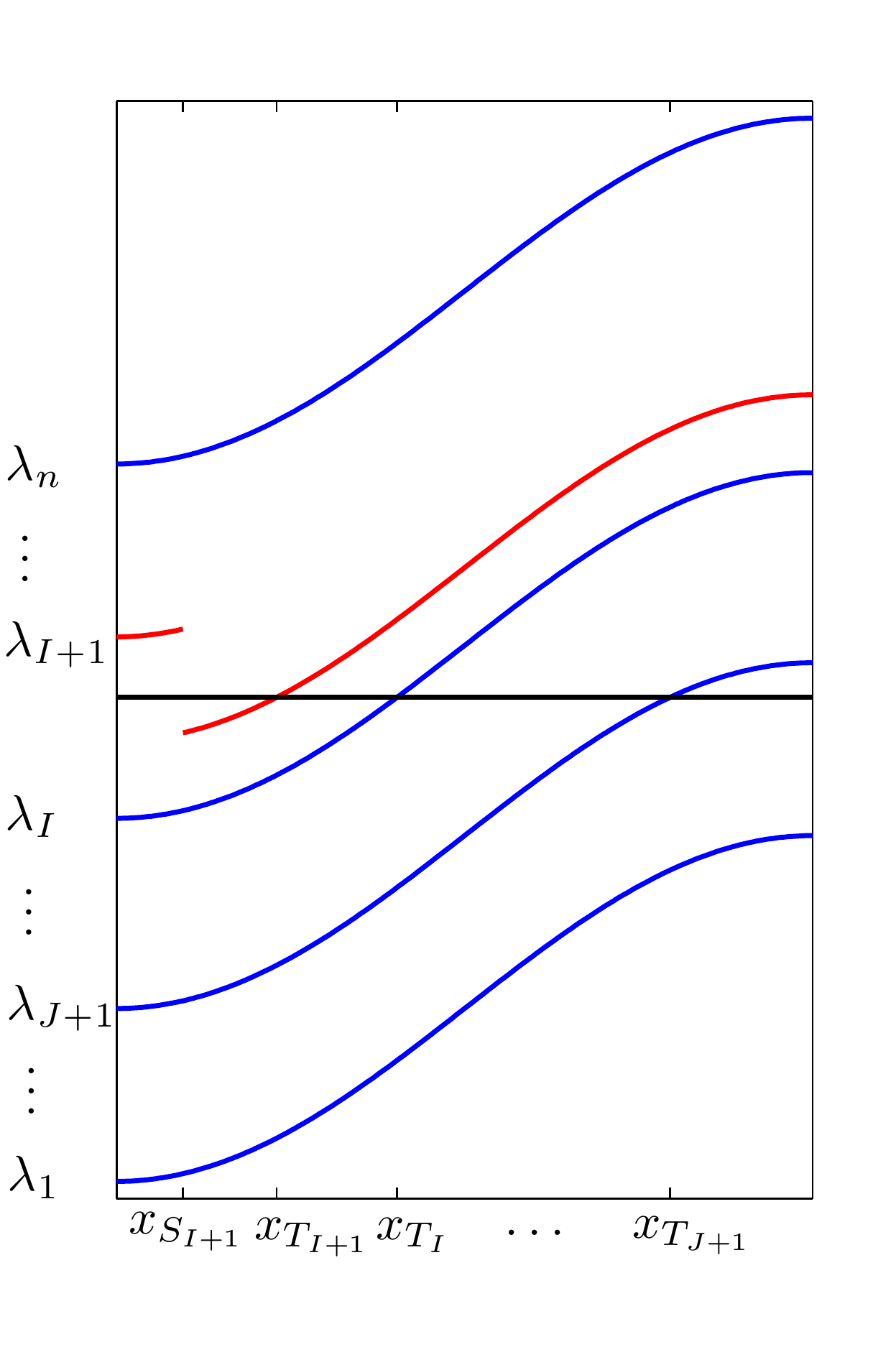}\label{fig:shock_I}}
	\subfigure[]{\includegraphics[width=0.45\textwidth]{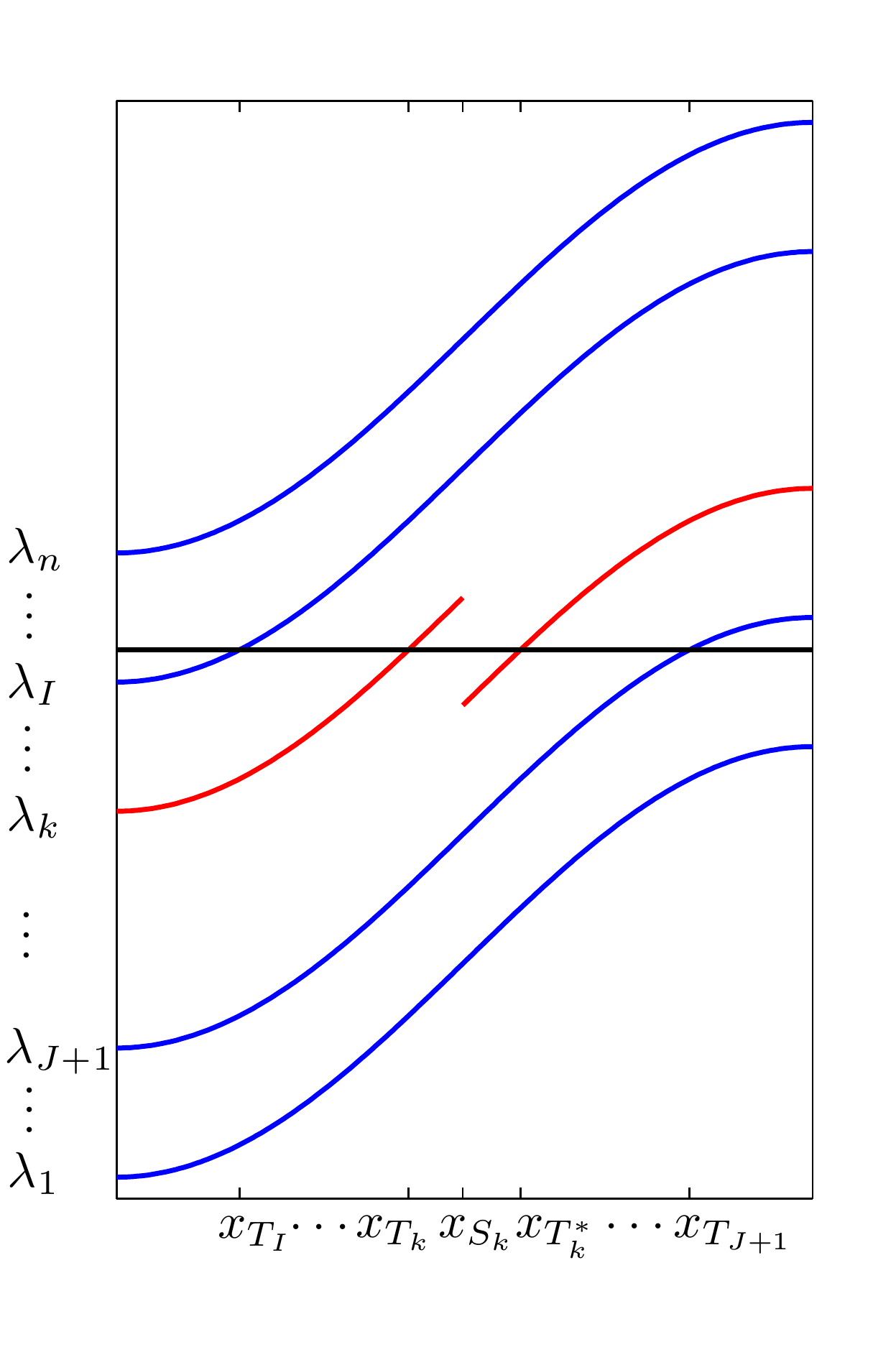}\label{fig:shock_k}}
	\subfigure[]{\includegraphics[width=0.45\textwidth]{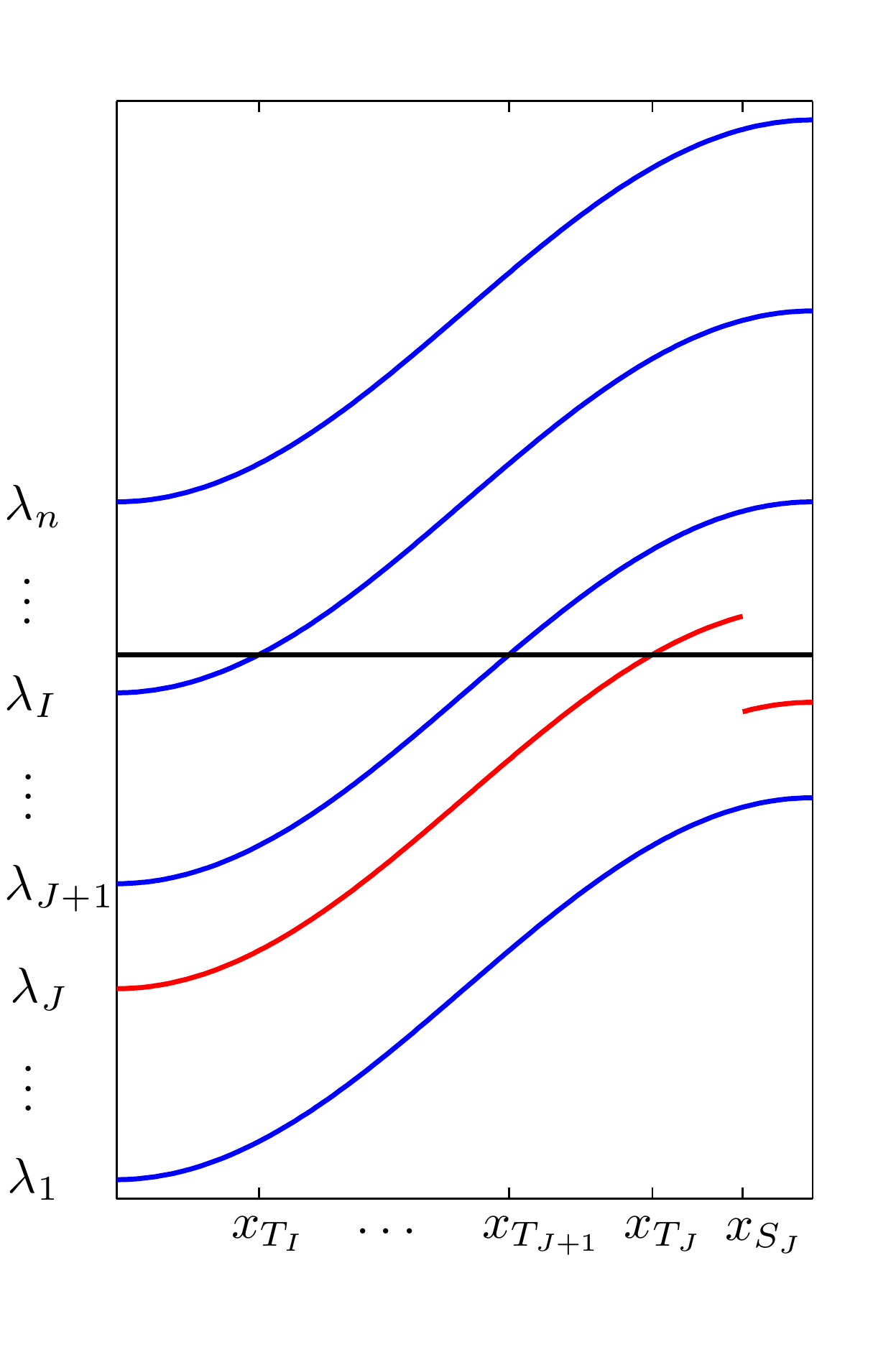}\label{fig:shock_J}}
  	\caption{Possible structures of solutions from~\S\ref{sec:structure} with $I>J-1$.  \subref{fig:shock_0}~Continuous solution, \subref{fig:shock_I}~shock in characteristic field $\lambda_{I+1}$, \subref{fig:shock_k}~shock in characteristic field $\lambda_k$ for $I \geq k \geq J+1$, and \subref{fig:shock_J}~shock in characteristic field $\lambda_J$.}
  	\label{fig:1dstructure}  	
\end{figure} 

Using this information about the permitted structure of solutions, we can now suggest a general algorithm for constructing a solution of~\eqref{eq:1dsteady} with a single, uniquely determined shock, subject to boundary conditions satisfying the assumptions of~\S\ref{sec:assumptions}.  By adjusting the initialisation of the unknowns, problems with multiple steady states could also be solved using this algorithm.

This approach can be founded upon any reasonable ODE solver for sweeping in (full) boundary conditions from the left ($x=x_L$).

Using the preceding discussion, and possibly additional information coming from physical intuition, we can limit the characteristic fields in which the shock can occur.  For each of these fields, we can attempt to construct a solution using Algorithm~\ref{alg:sweep1D}.

This algorithm requires solving a nested sequence of scalar equations.  In our implementation, we use a bisection method to solve these equations, but other solvers are also possible.  The unknowns that need to be determined are the missing boundary conditions at $x_L$ and the shock location.  Each unknown is determined by a matching condition---either a boundary condition at $x_R$ or a compatibility condition at a turning point.  These are summarised in Table~\ref{table:matching}.  Essentially, this method involves proceeding from left to right through the domain and computing each unknown in the order that its matching function is encountered.

Once these unknowns have been determined, a solution can be constructed by solving an initial value problem from $x_L$ to $x_S$, computing the appropriate jump at the shock, then solving another initial value problem form $x_S$ to $x_R$.

\begin{table}[htdp]
\caption{An overview of the structure of solution described in~\S\ref{sec:1shock} and Figures~\ref{fig:shock_case2}-\ref{fig:1dstructure} including each unknown, the matching function used to determine the unknown, the location where this matching occurs, and any conditions necessary for the presence of the unknown.}
\begin{center}
\begin{tabular}{|cccc|}
\hline
Unknown & Matching Function& Location for & Conditions \\
        &                  & Matching  & \\
\hline
$\alpha_I$      & $F_I = (P^{-1}a)_I$                  & $x_{T_I}$           & \multirow{3}{*}{$I\geq k$}\\
$\vdots$        & $\vdots $                            & $\vdots$            & \\
$\alpha_k$      & $F_k = (P^{-1}a)_k$                  & $x_{T_k}$           & \\
\hline
$x_{S_k}$       & $F_{k^*} = 
                   \begin{cases}
                   (P^{-1}a)_{k^*}, & k>J\\
                   B_R^k ,          & k=J
                   \end{cases}$                        & $\begin{cases}
                                                          x_{T_k^*}, & k>J\\
                                                          x_R,       & k=J
                                                          \end{cases}$       & \\
\hline                                                          
$\alpha_{k-1}$  & $F_{k-1} = (P^{-1}a)_{k-1}$          & $x_{T_{k-1}}$       & \multirow{3}{*}{$k > J+1$}\\
$\vdots$        & $\vdots$                             & $\vdots$            & \\
$\alpha_{J+1}$  & $F_{J+1} = (P^{-1}a)_{J+1}$          & $x_{T_{J+1}}$       & \\
\hline
$\alpha_J$      & $F_J = B_R^J$                        & $x_R$               & $k>J$\\
\hline
$\alpha_{J-1}$  & $F_{J-1} = B_R^{J-1}$                & $x_R$               & \multirow{3}{*}{$J>1$}\\
$\vdots$        & $\vdots$                             & $\vdots$            & \\
$\alpha_1$      & $F_1 = B_R^1$                        & $x_R$               & \\
\hline
\end{tabular}
\end{center}
\label{table:matching}
\end{table}

\algrenewcommand\algorithmicrepeat{\textbf{do}}
\algrenewcommand\algorithmicuntil{\textbf{while}}
\begin{algorithm}
\caption{Determine the unknowns summarised in Table~\ref{table:matching} in order to compute a solution with a single $k$-shock.}\label{alg:sweep1D}
\begin{algorithmic}[1]
\State Initialise $\alpha_1$
\Repeat $\quad\rhd$ Loop to determine $\alpha_1$
	\State$\vdots$
	\State Initialise $\alpha_{k-1}$
	\Repeat $\quad\rhd$ Loop to determine $\alpha_{k-1}$
		\State Initialise $\alpha_k$
		\Repeat $\quad\rhd$ Loop to determine $\alpha_k$
			\State$\vdots$
			\State Initialise $\alpha_I$
			\Repeat $\quad\rhd$ Loop to determine $\alpha_I$
				\State $\alpha_I\gets$ Update via nonlinear solver
			\Until $\abs{F_I(\alpha_1,\ldots,\alpha_I)} >\text{TOL}\,\,$
			\State $\vdots$
			\State$\alpha_k\gets$ Update via nonlinear solver
		\Until $\abs{F_k(\alpha_1,\ldots,\alpha_I)} >\text{TOL}\,\,$
		\State Initialise $x_{S_k}$
		\Repeat $\quad\rhd$ Loop to determine $x_{S_k}$
			\State $x_{S_k}\gets$ Update via nonlinear solver
		\Until $\abs{F_k^*(\alpha_1,\ldots,\alpha_I,x_{S_k})} >\text{TOL}\,\,$
		\State $\alpha_{k-1}\gets$ Update via nonlinear solver
	\Until $\abs{F_{k-1}(\alpha_1,\ldots,\alpha_I,x_{S_k})} >\text{TOL}\,\,$ 
	\State$\vdots$
	\State $\alpha_1\gets$ Update via nonlinear solver
\Until $\abs{F_1(\alpha_1,\ldots,\alpha_I,x_{S_k})} >\text{TOL}\,\,$
\end{algorithmic}
\end{algorithm}

\subsection{Convergence}\label{sec:converge}
In the special case where a solution consists of a single Lax shock with no turning points, we prove that our approach will compute the correct entropy solution.  Our methods also appear to compute the correct weak solution in the more general setting, but the well-posedness theory for these problems is much less clear, making it difficult to produce a very general proof.

In line with the discussion of the previous section, we express this steady state problem in the form
\bq\label{eq:CL1}
\begin{cases}
f(U)_x = a(U,x) & x_L < x < x_R\\
U = U_L^{\alpha_1,\ldots,\alpha_I} & x = x_L\\
B_R(U) = 0 & x = x_R.
\end{cases}
\eq

We say that this system is \emph{well-posed} if it satisfies the following assumptions.
\begin{enumerate}
\item[(A1)] The conservation law~\eqref{eq:CL1} has a unique solution $U^{ex}$, which consists of two smooth ($C^1$) states separated by a Lax shock at the location $x_S^{ex}$.  This solution is stable in $L^1$ under perturbations of the data.
\item[(A2)] For each $x\in[x_L,x_R]$, the flux function $f(U)$ is a $C^{1,1}$ diffeomorphism near $U^{ex}(x)$ and the source term $a(U,x)$ is Lipschitz near $(U^{ex}(x),x)$.  This ensures that the ODEs satisfied by each smooth solution component are well-posed.
\item[(A3)] There is a unique entropy satisfying solution of the Rankine-Hugoniot condition
\[ f\left(\Phi U^{ex}(x_S)\right) = f\left(U^{ex}(x_S)\right) \]
so that the jump operator is well-defined near $U^{ex}(x_S)$.
\item[(A4)] The functions $U_L^{\alpha_1,\ldots,\alpha_I}$ and $B_R(U)$ that define the boundary conditions are Lipschitz near $(\alpha_1^{ex},\ldots,\alpha_I^{ex})$ and $U^{ex}(x_R)$ respectively.
\end{enumerate}

We define the operator $\Prop_{x_1x_2}U_0$, which acts on an initial condition $U_0$ at a point $x_1$ by propagating it to $x_2$ via the solution of the system of ODEs
\bq\label{eq:ivp}
\begin{cases}
f(U)_x = a(U,x) & x_1 < x < x_2\\
U = U_0 & x = x_1
\end{cases}
\eq
so that $\Prop_{x_1x_2}U_0 = U(x_2)$.

We also recall that the jump operator $\Phi U_-$ returns a vector satisfying the Rankine-Hugoniot conditions
\[ f\left(\Phi U_-\right) = f(U_-) \]
as well as the Lax entropy conditions.

Using these operators, we can construct the solution to~\eqref{eq:CL1} if we are given the correct values of the unknowns outlined in Table~\ref{table:matching}:
\bq\label{eq:params}
y = \left(\begin{tabular}{c}$\alpha_1$\\ $\vdots$ \\ $\alpha_I$\\ $x_S$\end{tabular}\right).
\eq

Our approach then involves approximating the finite-dimensional solution vector $y^{ex}$ that satisfies
\bq\label{eq:CL_op} \G(y) \equiv B_R\left(\Prop_{x_Sx_R}\Phi\Prop_{x_Lx_S}U_L^{\alpha_1,\ldots,\alpha_I}\right) = 0. \eq
We approximate this by finding the solution $y^h$ of the discretised problem
\bq\label{eq:discrete_op} \G^h(y) \equiv B_R(\Prop^h_{x_Sx_R}\Phi^h\Prop^h_{x_Lx_S}U_L^{\alpha_1,\ldots,\alpha_I}) = 0, \eq
which is obtained by replacing the propagation and jump operators by discrete approximations.

Then the computed solution can be expressed as
\bq\label{eq:computed_sol}
U^h(x) = \begin{cases}
\Prop_{x_Lx}U_L^{\alpha_1^h,\ldots,\alpha_I^h} & x_L < x \leq x_S^h\\
\Prop_{x_S^hx}\Phi\Prop_{x_Lx_S^h}U_L^{\alpha_1^h,\ldots,\alpha_I^h} & x_S^h<x\leq x_R.
\end{cases}
\eq

\begin{remark}
In the special case of a one-dimensional scalar problem with boundary conditions given at $x_L$ and $x_R$, a simpler approach is to first compute left and right solution branches $U_-,U_+$; see~\S\ref{sec:RH_1d}.  In this case the only unknown is the shock location $x_S$ and the convergence results in Theorems~\ref{thm:converge}-\ref{thm:unique} can be applied to the scalar equation
\[ \G(x_S) \equiv f(U_-(x_S)) - f(U_+(x_S)) = 0. \]
\end{remark}

\begin{theorem}[Existence of a discrete solution]\label{thm:converge}
Suppose that the nonlinear conservation law~\eqref{eq:CL1} is well-posed.  Suppose also that the discrete operators $\Prop^h,\Phi^h$ are based upon consistent and stable approximations of the ODEs~\eqref{eq:ivp}, Lipschitz in the unknowns $y$, and approximate the continuous operators with accuracy on the order of $h^k$.  Then the discrete problem~\eqref{eq:discrete_op},\eqref{eq:computed_sol} has a solution $U^h$. Moreover, there is a constant $C$ such that for sufficiently small $h$, each component of the discrete solution satisfies
\[ \|u^{ex} - u^h\|_{L^1(x_L,x_R)} \leq Ch^k. \]
\end{theorem}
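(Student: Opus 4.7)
The plan is to view the approach as finding a root $y^h$ of the finite-dimensional nonlinear equation $\G^h(y) = 0$ which is a small perturbation of the exact equation $\G(y) = 0$ satisfied by $y^{ex}$, and then to convert the parameter error into an $L^1$ error on the reconstructed solution. The argument decomposes naturally into a consistency estimate for $\G^h$, a local invertibility estimate for $\G$, a perturbation existence result, and an $L^1$ assembly step.

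First I would establish a consistency estimate of the form $|\G^h(y) - \G(y)| \le C h^k$ uniformly on some fixed neighborhood $N$ of $y^{ex}$. This is obtained by chaining the assumed $O(h^k)$ accuracy of $\Prop^h$ and $\Phi^h$ through the composition defining $\G$, using (A2) and (A4): each Lipschitz layer (propagation along a smooth branch, the Rankine--Hugoniot jump, the right boundary functional $B_R$) converts a local $O(h^k)$ discretization error into an $O(h^k)$ contribution to the output, with constants depending only on bounds for the Lipschitz moduli on $N$. The assumed Lipschitz dependence of $\Prop^h, \Phi^h$ on $y$ guarantees that $\G^h$ itself is Lipschitz on $N$ with a constant independent of $h$.

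Next I would show that $\G$ has a Lipschitz local inverse near $y^{ex}$: there exists $L>0$ such that $|y - y^{ex}| \le L\,|\G(y)|$ for $y \in N$. This is the place where assumption (A1) is used essentially. Given $y$ near $y^{ex}$, the function $U(x;y)$ produced by $\Prop_{x_Lx_S}$, $\Phi$, $\Prop_{x_Sx_R}$ is an exact entropy-satisfying solution of the conservation law with modified right boundary residual $B_R(U(x_R;y)) = \G(y)$; by the $L^1$ well-posedness in (A1), this solution is $L^1$-close to $U^{ex}$ with a linear modulus in $|\G(y)|$, and the Lipschitz dependence of $U_L^{\alpha}$ on the parameters (A4), together with the smooth ODE dependence on initial data (A2) and smooth dependence of the Lax shock location on the incoming state, lets us transfer this $L^1$ closeness back to the parameters $y$. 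Combining the two estimates, on a ball of radius $r = 2LCh^k$ around $y^{ex}$ one has
\[
|\G^h(y)| \ge |\G(y)| - Ch^k \ge L^{-1}|y - y^{ex}| - Ch^k > 0
\]
on the boundary, so a standard Brouwer/topological degree argument (applied to the continuous map $\G^h$) yields a zero $y^h$ in this ball, giving $|y^h - y^{ex}| \le C' h^k$.

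Finally I would convert this parameter bound into the claimed $L^1$ rate. On the two smooth pieces $[x_L, x_S^h \wedge x_S^{ex}]$ and $[x_S^h \vee x_S^{ex}, x_R]$, both $U^{ex}$ and $U^h$ are smooth solutions of the same ODE system whose initial data differ by $O(h^k)$ (from the parameter perturbation, plus an $O(h^k)$ discretization error committed by $\Prop^h$ and $\Phi^h$); by Grönwall-type stability for (A2), the pointwise error is $O(h^k)$, so their contribution to the $L^1$ norm of each solution component is $O(h^k)$. On the short interval $[x_S^h \wedge x_S^{ex}, x_S^h \vee x_S^{ex}]$ of length $|x_S^h - x_S^{ex}| = O(h^k)$, the two solutions may differ by the $O(1)$ jump size, contributing another $O(h^k)$. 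Summing gives the stated bound $\|u^{ex} - u^h\|_{L^1} \le Ch^k$ for each component.

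The hard part will be step two, the Lipschitz invertibility of $\G$ near $y^{ex}$. Assumption (A1) provides stability of the \emph{solution} under perturbations of the \emph{data}, but what is needed is a quantitative lower bound relating the finite-dimensional output residual $\G(y)$ to the parameter displacement $y - y^{ex}$. This requires that each component of $y$ (the $I$ incoming-characteristic parameters $\alpha_i$ and the shock location $x_S$) acts on the right boundary residual through an independent, non-degenerate channel. The Lax shock assumption in (A1), together with the fact that the $\alpha_i$ parametrize precisely the characteristics that propagate rightward into the domain, makes the linearization $D\G(y^{ex})$ a natural candidate for an invertible matrix, but rigorously ruling out degenerate parameter combinations at the nonlinear level (rather than the linearized level) is where care is required; a clean route is to argue that nontrivial cancellation in $\G(y)$ would contradict the assumed uniqueness in (A1).
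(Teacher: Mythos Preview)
Your proposal is correct and follows essentially the same architecture as the paper: reduce to the finite-dimensional root problem $\G^h(y)=0$, establish the consistency bound $\|\G^h-\G\|=O(h^k)$, invoke a Lipschitz local inverse for $\G$ at $y^{ex}$ (the paper states this as property~(B2) and, like you, derives it from (A1)--(A4) without a detailed argument), and then apply a Brouwer-type result to produce $y^h$ with $|y^h-y^{ex}|=O(h^k)$.

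The one packaging difference is in how Brouwer is deployed. The paper composes with $\G^{-1}$ to form the auxiliary map $\F^h(y)=\G^{-1}\bigl(\G(y)-\G^h(y)\bigr)$ and observes that $\F^h$ sends a small ball about $y^{ex}$ into itself, so the Brouwer fixed-point theorem applies directly; a fixed point of $\F^h$ is exactly a zero of $\G^h$. Your route instead shows $\G^h\neq 0$ on the boundary of that ball and appeals to homotopy invariance of degree along $t\G^h+(1-t)\G$. Both are valid and rest on the same two estimates; the paper's fixed-point reformulation is slightly slicker in that it avoids checking that $\deg(\G,B,0)\neq 0$. Your final $L^1$ assembly (Gr\"onwall on the two smooth pieces plus an $O(h^k)$-length mismatch interval carrying an $O(1)$ jump) is actually more explicit than the paper's one-line appeal to stability, and is the right way to justify that step.
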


\begin{proof}
We begin by using assumptions (A1)-(A4) to make several observations about the operator $\G(y)$:
\begin{enumerate}
\item[(B1)] The equation
\[ \G(y) = 0 \]
has a unique solution $y^{ex}$.
\item[(B2)] There exists an open set $V$ containing the origin such that the inverse operator $\G^{-1}$ is defined and Lipschitz in $V$.  In particular, for every $b\in V$,
\[ \|\G^{-1}(b) - y^{ex}\| \leq C\|b\|. \]
\item[(B3)] There exists an open set $Y$ containing $y^{ex}$ such that $\G:Y\to V$ is Lipschitz continuous.
\end{enumerate}

Next we define the operator
\bq\label{eq:new_op} \F^h(y) \equiv \G^{-1}\left(\G(y)-\G^h(y)\right), \quad y\in Y, \eq
which is defined for sufficiently small $h$ as a consequence of~(B2). We can also say that
\[ \|\F^h(y)-y^{ex}\| = \|\G^{-1}\left(\G(y)-\G^h(y)\right)-y^{ex}\| \leq C \|\G(y)-\G^h(y)\| \leq Ch^k. \]
We can conclude that the range of $\F^h$ is contained in a ball of radius $Ch^k$ centred at $y^{ex}$:
\[ \F^h : Y \to B(y^{ex}; Ch^k) \]
Additionally, we know that for sufficiently small $h$, this ball is contained in the set $Y$,
\[  B(y^{ex}; Ch^k) \subset Y \]
so that
\[\F^h:B(y^{ex}; Ch^k)\to B(y^{ex}; Ch^k).\]

Since $\F^h$ is a continuous operator (by continuity of $\G,\G^{-1},\G^h$), we can use Brouwer's fixed point theorem to conclude that $\F^h$ has a fixed point $y^h$ in this ball.  That is, there exists $y^h\in B(y^{ex}; Ch^k)$ such that
\[ y^h = \F^h(y^h) = \G^{-1}\left(\G(y^h) - \G^h(y^h)\right), \]
which means that
\[ \G^h(y^h) = 0 \quad \text{and} \quad \|y^h-y^{ex}\| \leq Ch^k. \]
From the stability of the propagation and jump operators~(A2),(A3) and the accuracy of their discrete approximations, we conclude that the discrete solution $U^h$ has accuracy on the order of $h^k$ in $L^1$.
\end{proof}

We can also conclude that as long as we restrict the choice of parameters to what is essentially the regime where the conservation law~\eqref{eq:CL1} is well-posed, there is no danger that an appropriate discrete approximation will compute any spurious solutions that are far away from the correct entropy solution.

\begin{theorem}[Non-existence of spurious discrete solutions]\label{thm:unique}
Under the hypotheses of Theorem~\ref{thm:converge}, there is an open set $Y$, independent of $h$, such that any solution $y^h_*\in Y$ of the discrete problem~\eqref{eq:discrete_op} satisfies
\[ \|y^h_*-y^{ex}\| \leq Ch^k. \]
\end{theorem}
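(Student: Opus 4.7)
The plan is to exploit the Lipschitz invertibility of the continuous operator $\G$ (properties (B1)--(B3) established in the proof of Theorem~\ref{thm:converge}) together with uniform consistency of $\G^h$ on a fixed neighborhood. The strategy is much more direct than in Theorem~\ref{thm:converge}: we do not need a fixed-point argument, just an a posteriori error bound.

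First I would fix the open set $Y$ to be exactly the one from (B3), i.e., a neighborhood of $y^{ex}$ on which $\G:Y\to V$ is Lipschitz continuous and on which the inverse $\G^{-1}:V\to Y$ is also Lipschitz (with constant $C$), where $V$ is an open neighborhood of the origin. This $Y$ depends only on the continuous problem and its well-posedness assumptions (A1)--(A4), hence is independent of $h$. Shrinking $Y$ if necessary, I would arrange that the closure of $Y$ lies in the region where the consistency estimate for the composed operator holds uniformly, so that
\[ \sup_{y\in Y}\|\G(y)-\G^h(y)\| \leq Ch^k \]
for all sufficiently small $h$. This uniform consistency follows by composing the consistency and stability estimates of $\Prop^h$ and $\Phi^h$ along the bounded trajectories parameterized by $y\in Y$, using the Lipschitz hypothesis on the discrete operators in the unknowns.

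Now suppose $y^h_* \in Y$ solves $\G^h(y^h_*)=0$. Then
\[ \G(y^h_*) = \G(y^h_*) - \G^h(y^h_*), \]
so the uniform consistency bound gives $\|\G(y^h_*)\|\leq Ch^k$. For $h$ small enough, $\G(y^h_*)\in V$ and we may apply $\G^{-1}$. Using $\G(y^{ex})=0$ together with the Lipschitz bound from (B2),
\[ \|y^h_* - y^{ex}\| = \|\G^{-1}(\G(y^h_*)) - \G^{-1}(0)\| \leq C\,\|\G(y^h_*)\| \leq Ch^k, \]
which is the desired estimate.

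The main obstacle, as in Theorem~\ref{thm:converge}, is justifying the uniform consistency of $\G-\G^h$ on all of $Y$ rather than in an infinitesimal neighborhood of $y^{ex}$. The subtle point is that $\G$ involves the composition $\Prop_{x_Sx_R}\circ\Phi\circ\Prop_{x_Lx_S}$, and the interior shock location $x_S$ is one of the parameters in $y$; one must verify that for every $y\in Y$ the three steps (propagation to $x_S$, jump, propagation from $x_S$) remain in regimes where assumptions (A2) and (A3) apply, so that the discrete operators remain consistent and Lipschitz. This can be arranged by taking $Y$ small enough that the intermediate states stay in tubular neighborhoods of the exact smooth branches of $U^{ex}$ on which $f$ is a diffeomorphism and $\Phi$ is well-defined, after which the composition estimate is a routine product of stability and consistency constants.
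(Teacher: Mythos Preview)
Your proof is correct and follows essentially the same route as the paper: fix $Y$ as in (B3), write $\G(y^h_*)=\G(y^h_*)-\G^h(y^h_*)$ using $\G^h(y^h_*)=0$, bound this by $Ch^k$ via consistency, and conclude with the Lipschitz estimate (B2) for $\G^{-1}$. Your discussion of uniform consistency on all of $Y$ and of keeping the intermediate states inside the regime where (A2)--(A3) apply is actually more careful than the paper's own argument, which simply invokes (B1)--(B3) and the $h^k$ bound without further comment.
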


\begin{proof}
Let $Y$ be the open set defined in~(B3).  Using properties~(B1)-(B3), we conclude that
\begin{align*}
\|y^h_* - y^{ex}\|
  &= \|\G^{-1}(\G(y^h_*)) - y^{ex}\|\\
  &\leq C\|\G(y^h_*)\| \\
  &= C\|\G(y^h_*) - \G^h(y^h_*)\|\\
  &\leq Ch^k.
\end{align*}
Thus any solution of the discretised problem lying in the set $Y$ (roughly, the parameter regime where the original problem is well-posed) will approximate the solution of the exact problem with an accuracy on the order of $h^k$.
\end{proof}

\section{Two-Dimensional Problems}\label{sec:2d}
Next we turn our attention to steady state solutions of the two-dimensional conservation law,
\bq\label{eq:cl2D} u_t + f(u)_x + g(u)_y = a(u,x,y), \eq
together with suitable boundary conditions.

The idea of our approach is to view the steady state equations as a free boundary problem.  The solution will consist of smooth states that are separated by a shock curve as in~\cite{Feldman}.  Our approach to this problem involves two basic steps:
\begin{enumerate}
\item Computing the smooth solution branches.
\item Constructing the shock curve (that is, the free boundary) that separates the smooth states.
\end{enumerate}

\subsection{Generating Solution Branches}\label{sec:branch2d}
We start by generating solution branches by sweeping in the boundary conditions.  
To do this, we look at a paraxial form of the equation, which is essentially treating one of the spatial dimensions like a time dimension~\cite{QianSymesParaxial03}.

For example, if we want to sweep in the bottom boundary condition, we would treat the $y$-direction like the time axis and solve 
\bq\label{eq:paraxial}
v_y + f(g^{-1}(v))_x = a(g^{-1}(v),x,y) 
\eq
as long as locally we can invert $g(u)$.  Then the bottom branch of the solution is 
\[ u_B = g^{-1}(v). \]
This inversion can be accomplished efficiently using Newton's method, using the value from the previous ``time'' step as a starting guess.

We can perform this sweeping using any suitable method.  In the computations below, we use forward Euler for the ``time'' dimension and a Gudonov flux for the ``spatial'' dimension, but other methods---including higher-order or even non-conservative methods---can also be incorporated into this sweeping procedure. 

We may not be able to sweep all the way across the domain.  If at some point an eigenvalue of $\nabla g(u_B)$ becomes close to zero, we cannot sweep this value any farther.    This indicates that this vertical sweeping direction is not appropriate for updating this portion of the domain.  Instead, these values will be computed by sweeping from the left or right.  See the example in~\S\ref{sec:2dscalar}.

Also, if the given boundary data on the left and right are not consistent with the orientation of the characteristics (determined by the sign of the eigenvalues of $\nabla g$), we discard these values.  If possible (that is, if the characteristic structure permits it), we can instead update these boundary values using the given conservation law and appropriate one-sided differences.

We can use a similar procedure to generate solution branches that sweep from the other sides of the domain.

\subsection{Matching Solution Branches}\label{sec:match}
Once we obtained the required solution branches, we combine these two at a time to assemble the final solution.  For example, we can start with the left branch $u_L$ and the bottom branch $u_B$.  We construct a curve that splits the domain into two pieces, which determines which solution branch should be used where.  We start the curve at a discontinuity, where the given ``initial'' conditions for the two branches will meet.  As described below, we use the Rankine-Hugoniot conditions to extend this curve until it again hits the boundary of the domain.  We can then repeat the procedure using other solution branches until all boundary conditions are satisfied.

To grow the curve that divides two solution branches, we will look at one small cell $[x_i,x_{i+1}]\times[y_j,y_{j+1}]$.  We know where the curve enters this cell and want to determine where the curve will exit this cell.  If we approximate the curve by a straight line segment in this cell, this exit point can be determined once we know the direction normal to the curve.

To compute the normal, we require an approximation of the jumps in flux ($[[f]]$, $[[g]]$) at the entry point.  These values are easily computed at nearby grid points via
\[ [[f]] = f(u_+) - f(u_-), \quad [[g]] = g(u_+) = g(u_-) \]
where $u_+$ and $u_-$ are the values of the solution branches that are being matched.  Once this is done, we can interpolate to approximate the change in flux at the entry point.

We typically expect the normal vector to satisfy the Rankine-Hugoniot condition~\eqref{eq:RH_2D}:
\[ n_1[[f]] + n_2[[g]] = 0. \]
We can always find a direction that satisfies this condition.  If the direction we come up with satisfies the entropy condition~\eqref{eq:entropy2D},
\[ n_1f'(u_-) + n_2g'(u_-) > 0, \quad n_1f'(u_+) + n_2g'(u_+) < 0,  \]
then we can extend the curve using this value.

It is worth noting that the procedure for matching two solution branches only requires $\bO(\sqrt{N})$ time, where $N$ is the total number of grid points; this has no effect on the overall computational complexity of the algorithm since the sweeping step requires $\bO(N)$ time.

\subsubsection{Example with three states}\label{sec:3states}
In the first example, we consider the equation
\bq\label{eq:3states} \left(ku^2\right)_x + \left(u-u^3\right)_y = 0, \quad [x,y]\in[0,1]^2. \eq

We choose boundary conditions from three different constant values: $u_0 = 0$ on the bottom, $1/\sqrt{3}<u_L<1$ on the left side, as well as the left half of the top side, and $uR = -uL$ on the remainder of the boundary.  

The exact solution consists of three constant states divided by straight line segments.  The line segment joining the bottom and left states starts from the lower-left corner and has slope $\alpha = \frac{1-u_L^2}{ku_L}$.  Similarly, the line segment joining the bottom and right states has slope $-\alpha$.

We use our sweeping approach to compute this solution,  taking $\alpha=1.2$ and $u_L = 0.75$.  This involves first combining the left and bottom states (starting from the bottom left corner), then combining this result with the right state (starting from the bottom right corner).  

We also repeat this example, this time replacing the left and right boundary values by
\[  u(0,y) = 0.75+0.2\sin(\pi y), \quad u(1,y) = -u(0,y). \]
The resulting solution will now consist of three non-constant states, which are divided by curves rather than straight line segments.

The computed solutions for both examples are shown in \autoref{fig:3states}.  We make particular note of the sharp shocks that were produced with this method.  Computation times are given in Table~\ref{table:3states}.  For comparison, we also provide computation times for a simple explicit time-stepping method using Godunov fluxes.  It is clear that the sweeping method is much more efficient.

\begin{table}[htdp]
\caption{Computation time on an  $m\times m$ grid ($N=m^2$) for the examples with three states in~\S\ref{sec:3states}.}
\begin{center}
\begin{tabular}{c||cccccc}
& \multicolumn{6}{c}{Constant States}\\
\hline
$m$  & 32 & 64 & 128 & 256 & 512 & 1024\\
CPU Time (s) for Sweeping & 0.15 & 0.28 & 0.70 & 2.06 & 6.78 & 24.41\\
CPU Time (s) for Evolution & 0.07 & 0.17 & 0.85 & 7.97 & 127.20 & ---\\
& \multicolumn{6}{c}{Non-constant States}\\
\hline
$m$  & 32 & 64 & 128 & 256 & 512 & 1024\\
CPU Time(s) for Sweeping & 0.18 & 0.38 & 0.92 & 2.57 & 8.20 & 28.68\\
CPU Time (s) for Evolution & 0.07 & 0.18 & 0.74 & 7.28 & 115.99&---\\
\end{tabular}
\end{center}
\label{table:3states}
\end{table}

\begin{figure}[htdp]
	\centering
	\subfigure[]{\includegraphics[width=.48\textwidth]{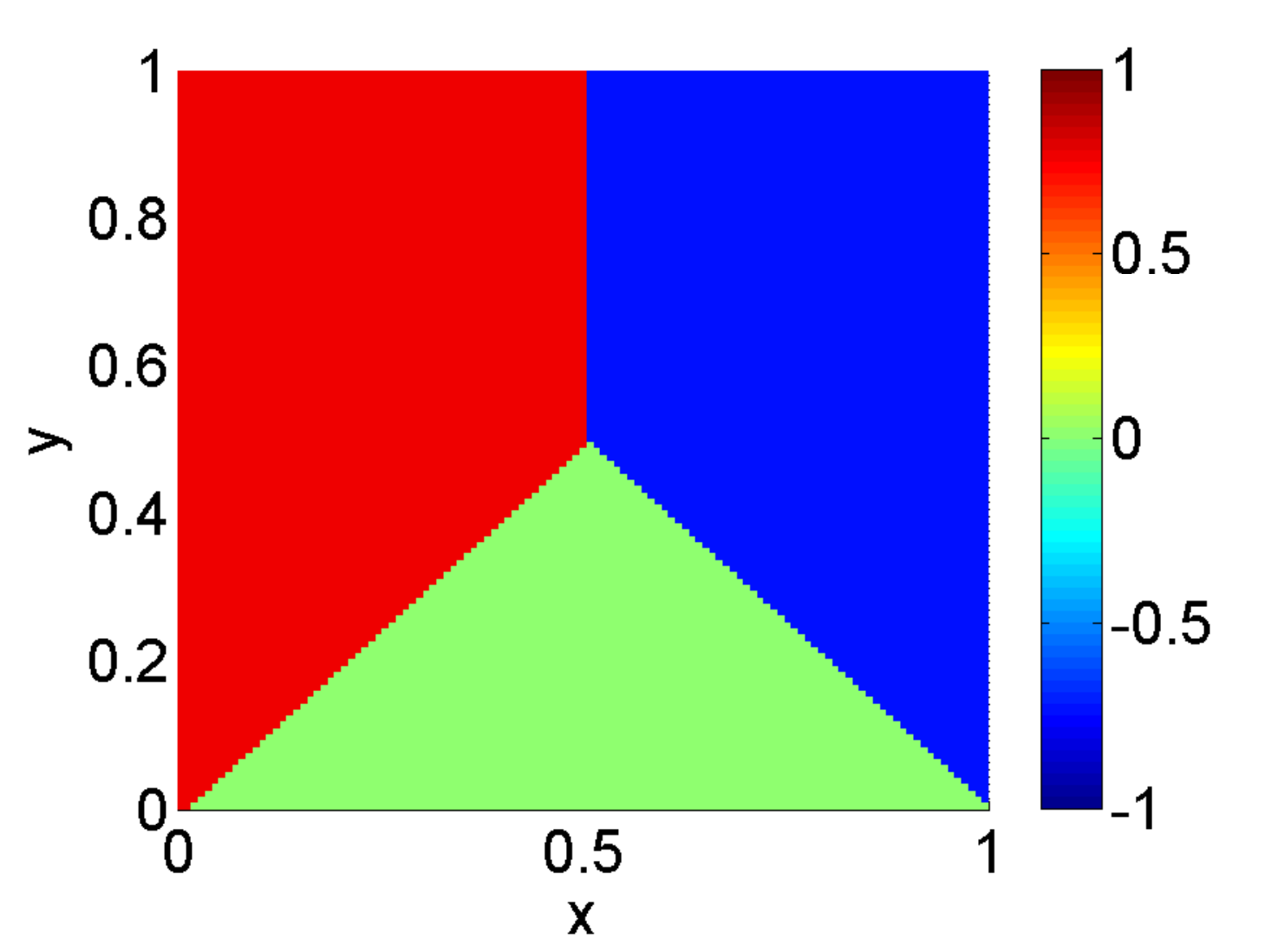}\label{fig:three_sol}}
        \subfigure[]{\includegraphics[width=.48\textwidth]{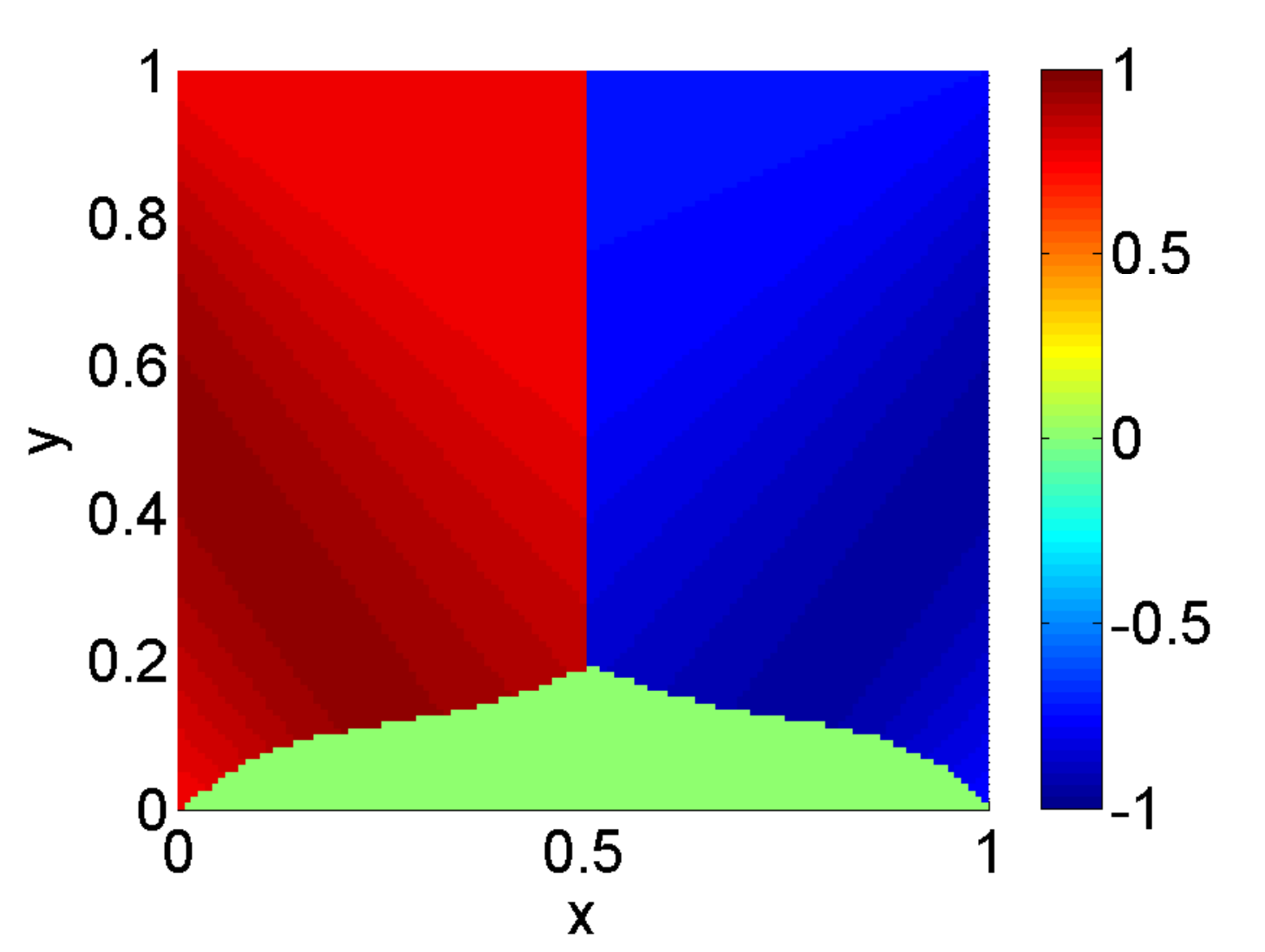}\label{fig:three_eps}}
  	\caption{Solutions from~\S\ref{sec:3states} with three \subref{fig:three_sol}~constant or \subref{fig:three_eps}~non-constant states computed on a $120\times120$ grid.}
  	\label{fig:3states}  	
\end{figure}

\subsection{Verifying the Entropy Condition}\label{sec:checkEntropy}
In certain degenerate cases, special care is needed in constructing the correct, entropy-satisfying curve.  For example, if the flux functions are the same: $f(u) = g(u)$ then the direction $n_1 = -n_2$ will always satisfy the Rankine-Hugoniot condition, 
\[ n_1[[f]] + n_2[[g]] = 0, \]
but it may not satisfy the entropy condition.  In this case, we instead need to find a direction $n$ that will make the change in flux zero across the curve: $[[f]] = [[g]] = 0$. To accomplish this, we choose a direction that will cause the curve to exit a side where the jumps $[[f]]$ and $[[g]]$ change sign, again checking the entropy condition.

This could still fail due to numerical errors introduced in the sweeping step.  It may be impossible to make the change in flux exactly zero across the curve.  Then we just need to make this change as small as possible in some sense.  For example, we could choose to have the curve exit the side that makes the quantity
\[ [[f_1]][[f_2]],[[g_1]][[g_2]]\} \]
as small as possible.  Here $[[f_1]]$ and $[[f_2]]$ are the jump in flux evaluated at two adjacent corners of the cell, which form the endpoints of one side of the cell.  This quantity is always positive since either $[[f]]$ or $[[g]]$ is not changing sign.  Again, we limit ourselves to directions that are entropy correct,
\[ n_1f'(u_-) + n_2g'(u_-) > 0, \quad n_1f'(u_+) + n_2g'(u_+) < 0.  \]

\subsubsection{2D Burger's equation}\label{sec:burgers2d}
The next example we consider is the two-dimensional Burger's equation, which illustrates the importance of verifying the entropy conditions since the direction $n_1 = -n_2$ will always satisfy the Rankine-Hugoniot condition, even though this does not lead to the correct solution.  Here we use the burger's flux and solve
\bq\label{eq:burgers2d} \left(\frac{u^2}{2}\right)_x + \left(\frac{u^2}{2}\right)_y = u(1-\phi'(x))\psi'(y-\phi(x)) \eq
with 
\[\phi(x) = 0.5+0.5\cos(\pi x), \quad \psi(z) = -\sin(\pi z), \quad  u_0^L = 2,\quad u_0^R = -2. \]

The exact solution consists of two smooth components separated by the curve $y = \phi(x)$:
\[u(x,y) = 
\begin{cases}
u_0^L + \psi(y-\phi(x)) & y<\phi(x)\\
u_0^R + \psi(y-\phi(x)) & y > \phi(x).
\end{cases}
\]
We solve this by sweeping and matching different solution branches; 
the computed and exact solutions are shown in Figure~\ref{fig:burgers2d}.  Computation times, shown in Table~\ref{table:burgers2d}, validate our claim that the computational complexity of this method is linear in the number of grid points.

\begin{table}[htdp]
\caption{Computation time on an  $m\times m$ grid ($N=m^2$) for the 2D Burger's equation in~\S\ref{sec:burgers2d}.}
\begin{center}
\begin{tabular}{c||cccccc}
$m$  & 32 & 64 & 128 & 256 & 512 & 1024\\
\hline
CPU Time (s) & 0.15 & 0.19 & 0.45 & 1.22 & 3.89 & 12.85\\
\end{tabular}
\end{center}
\label{table:burgers2d}
\end{table}

\begin{figure}[htdp]
	\centering
	\subfigure[]{\includegraphics[width=.48\textwidth]{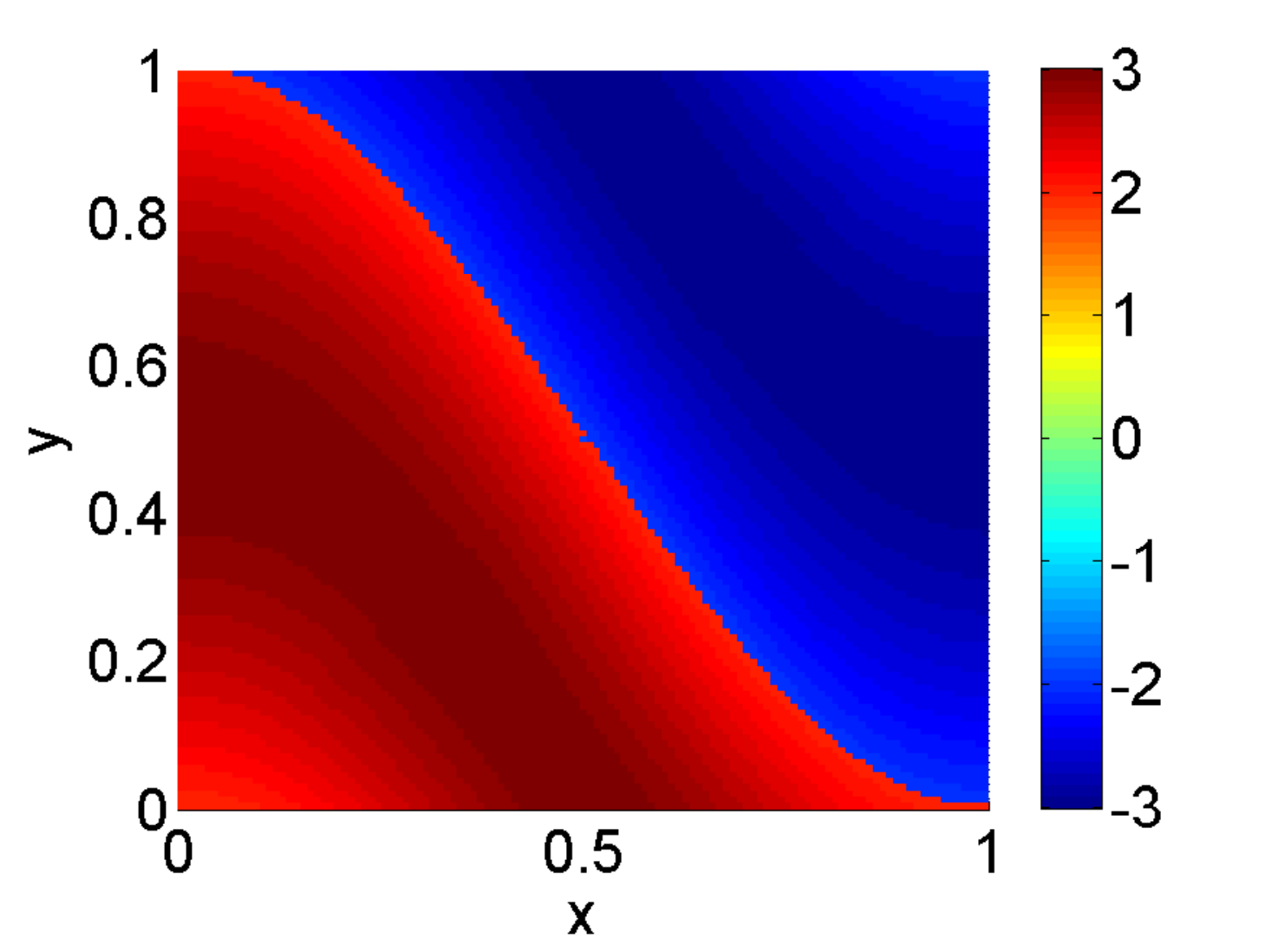}\label{fig:RH_2D_sol}}
  \subfigure[]{\includegraphics[width=.48\textwidth]{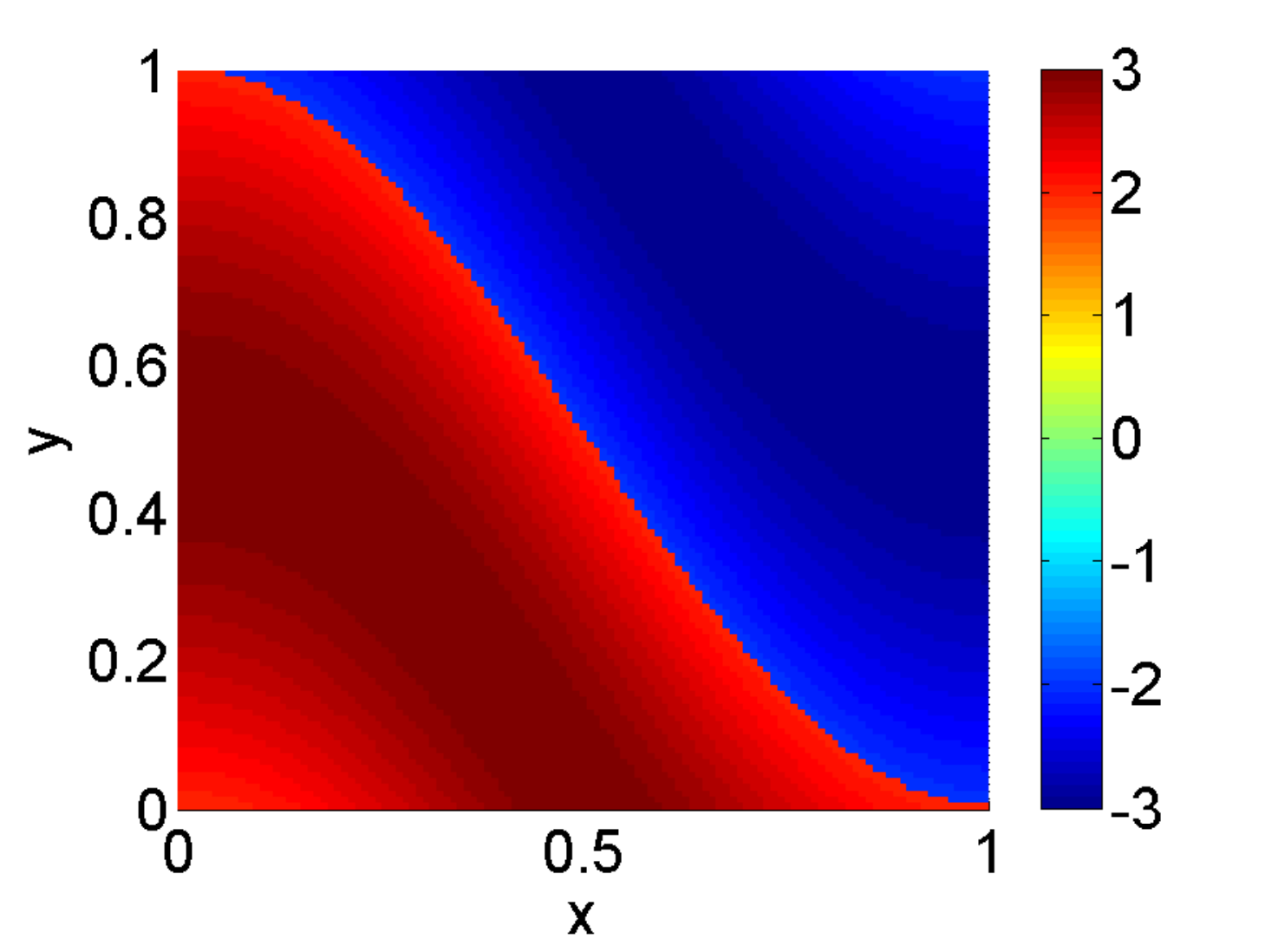}\label{fig:burgers_ex}}
  	\caption{\subref{fig:RH_2D_sol}~Computed and \subref{fig:burgers_ex}~exact solution to the 2D Burger's equation in~\S\ref{sec:burgers2d} on a $120\times120$ grid.}
  	\label{fig:burgers2d}  	
\end{figure}

\subsection{Sweeping through a Shock}\label{sec:sweepShock}

It is clear that as long as solutions are smooth, the paraxial form of the conservation law is equivalent to the original steady state equations.  However, in some cases a shock could develop as we evolve this paraxial equation.  We show that the shock curve that develops is a valid stationary shock of the original conservation law.  Then provided we use a conservative method to solve the paraxial equation, any resulting shocks will be valid entropy shocks.

\begin{theorem}[Equivalence of stationary conservation law and paraxial equation]
Let $u:\Omega\subset\R^2\to\R$ be a function consisting of two $C^1$ states separated by a smooth curve $\Gamma$, which divides the domain $\Omega$ into two disjoint sets $\Omega_-$ and $\Omega_+$.  Let $f$ and $g$ be two differential flux functions and assume that $g'(u)>0$ at all points in $u(\Omega)$.  Then $u$ is a stationary entropy solution of the conservation law~\eqref{eq:cl2D} if and only if $v \equiv g(u)$ is an entropy solution of the paraxial equation~\eqref{eq:paraxial}.
\end{theorem}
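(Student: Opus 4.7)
The plan is to prove both directions simultaneously by verifying, in three successive steps, that on the smooth pieces, across the discontinuity set $\Gamma$, and in the entropy admissibility criterion, the two formulations say exactly the same thing. The hypothesis $g'(u)>0$ on $u(\Omega)$ is what makes this work: it turns the substitution $v = g(u)$ into a $C^1$ diffeomorphism, so the piecewise-$C^1$ structure of $u$ (with shock curve $\Gamma$) is inherited by $v$, and it later lets us divide inequalities by $g'$ without flipping them.

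On each smooth component $\Omega_\pm$, differentiating $v = g(u)$ gives $v_y = g'(u)\, u_y = g(u)_y$, and by definition $f(g^{-1}(v))_x = f(u)_x$ and $a(g^{-1}(v),x,y) = a(u,x,y)$; so the paraxial equation~\eqref{eq:paraxial} holds pointwise off $\Gamma$ if and only if the stationary conservation law~\eqref{eq:cl2D} does. To handle $\Gamma$, I would parametrize it locally as $x = \phi(y)$, with $\Omega_-$ on the side $x < \phi(y)$, so that a normal vector pointing into $\Omega_+$ is $n = (1,-\phi'(y))$. Substituting into the 2D Rankine--Hugoniot condition~\eqref{eq:RH_2D} yields
\[ [[f(u)]] - \phi'(y)\,[[g(u)]] = 0, \]
which, writing $F(v) = f(g^{-1}(v))$ and $v = g(u)$, is precisely $\phi'(y)\,[[v]] = [[F(v)]]$, the Rankine--Hugoniot condition for the 1D conservation law~\eqref{eq:paraxial} with $y$ in the role of time and shock speed $\phi'(y)$.

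For the entropy side, with the same $n$ and the same $\phi'(y)$, the 2D entropy condition~\eqref{eq:entropy2D} reads
\[ f'(u_+) - \phi'(y)\, g'(u_+) < 0 < f'(u_-) - \phi'(y)\, g'(u_-). \]
Dividing by $g'(u_\pm) > 0$ and using $F'(v) = f'(u)/g'(u)$, this is equivalent to $F'(v_+) < \phi'(y) < F'(v_-)$, the Lax entropy condition for the paraxial shock. Combining the three steps closes the ``if and only if''.

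The main obstacle is purely bookkeeping: the orientation of $n$ must be fixed consistently with the labeling of $v_\pm$ on the two sides of $\Gamma$, and the sign $g'(u) > 0$ must be invoked at the right moment to preserve strict inequalities when dividing. A secondary subtlety is that ``entropy solution of the paraxial equation'' should be read through the Lax condition for piecewise-$C^1$ solutions, which agrees with the standard Kruzkov/Oleinik notion under the smoothness hypotheses assumed here, so that the translation carried out in the third step is enough.
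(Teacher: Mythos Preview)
Your proposal is correct and follows essentially the same route as the paper: handle the smooth regions trivially, translate the Rankine--Hugoniot condition across $\Gamma$ by relating the shock speed $\phi'(y)$ of the paraxial equation to the normal direction $(1,-\phi'(y))$, and then translate the Lax entropy inequalities by dividing through by $g'(u_\pm)>0$. The paper carries out one direction and appeals to reversibility, whereas you organise the three steps as equivalences from the outset; the only other cosmetic difference is that the paper keeps an abstract normal $(n_1,n_2)$ with $-n_2/n_1=s$ rather than fixing the parametrisation $x=\phi(y)$.
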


\begin{proof}
Suppose that $v=g(u)$ is an entropy solution of the paraxial equation.  We first show that it is also a stationary solution of the original conservation law.

If $x\notin\Gamma$ then $u$ is smooth at this point and the two formulations are trivially equivalent.

We now consider points $x\in\Gamma$ that lie on the shock curve.  We further let $(n_1,n_2)$ be a vector normal to the curve and pointing from $\Omega_-$ to $\Omega_+$.

The speed of the shock obtained from the paraxial form of the equation is
\[  s = \frac{[[f(g^{-1}(v))]]}{[[v]]} = \frac{[[f(u)]]}{[[g(u)]]}. \]
The normal vector is related to the shock speed through
\[  -\frac{n_2}{n_1} = s = \frac{[[f(u)]]}{[[g(u)]]}. \]
Rearranging, we find that
\[ (n_1,n_2)\cdot([[f(u)]],[[g(u)]])=0, \]
which is precisely the Rankine-Hugoniot condition for a stationary shock~\eqref{eq:RH_2D}.

We also look at the entropy condition for this solution of the paraxial equation:
\[ \left.\frac{d}{dv}f(g^{-1}(v))\right|_{v = v_+} < s < \left.\frac{d}{dv}f(g^{-1}(v))\right|_{v = v_-}. \]
This is equivalent to
\[ \frac{f'(u_+)}{g'(u_+)} < -\frac{n_2}{n_1} < \frac{f'(u_-)}{g'(u_-)}. \]
Rearranging, we find that
\[ (n_1,n_2)\cdot(f'(u_+),g'(u_+)) < 0 < (n_1,n_2)\cdot(f'(u_-),g'(u_-)), \]
which is the entropy condition~\eqref{eq:entropy2D} for a stationary shock solution of the original conservation law.

We conclude that the function $u$ is a stationary entropy solution of the conservation law.
Since all the above steps are reversible, this completes the proof.
\end{proof}

\subsubsection{Example where a shock forms}\label{sec:2dscalar}
We consider an example from~\cite{Chen_LFSweeping}, which involves solving
\bq\label{eq:2dscalar} \left(\frac{u^2}{2}\right)_x + u_y = 0 \eq
subject to the boundary conditions 
\[ u(0,y) = 1.5, \quad u(1,y) = -0.5, \quad u(x,0) = 1.5-2x. \]
In this example, we can obtain the entire solution by sweeping once from the bottom of the domain.  The computed solution is shown in Figure~\ref{fig:2dscalar}.  In addition, we include computation times (Table~\ref{table:2dscalar}) to demonstrate the linear computational complexity of the sweeping step.

For illustration of the effects of sweeping, we also show the left solution branch, which is only defined up to the region where $f'(u)$ vanishes.  Note that this is not needed to generate this solution.  However, we could choose to match these two branches using the procedure described in~\S\ref{sec:match}; this would produce a sharp shock instead of a shock spread over a couple grid points (as would be computed by conventional methods).

\begin{table}[htdp]
\caption{Computation time on an $m\times m$ grid ($N=m^2$) for the 2D scalar equation in~\S\ref{sec:2dscalar} solved by sweeping once from the bottom, or by sweeping and matching the bottom and left solution branches.}
\begin{center}
\begin{tabular}{c||cccccc}
$m$  & 32 & 64 & 128 & 256 & 512 & 1024\\
\hline
CPU Time (s) & 0.05 & 0.11 & 0.22 & 0.48 & 1.14 & 3.08\\
\end{tabular}
\end{center}
\label{table:2dscalar}
\end{table}

\begin{figure}[htdp]
	\centering
	\subfigure[]{\includegraphics[width=.48\textwidth]{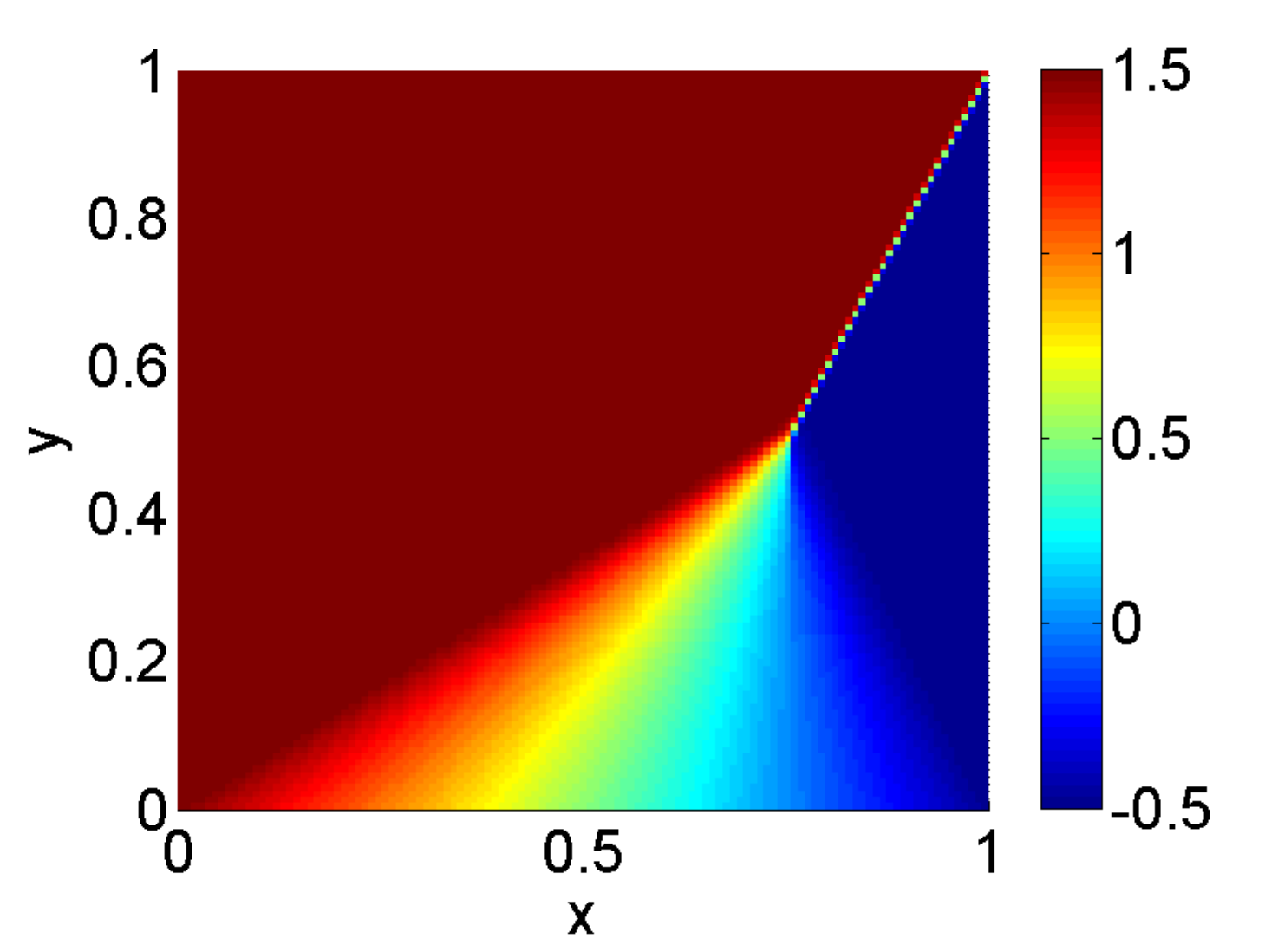}\label{fig:stupidB}}
	\subfigure[]{\includegraphics[width=.48\textwidth]{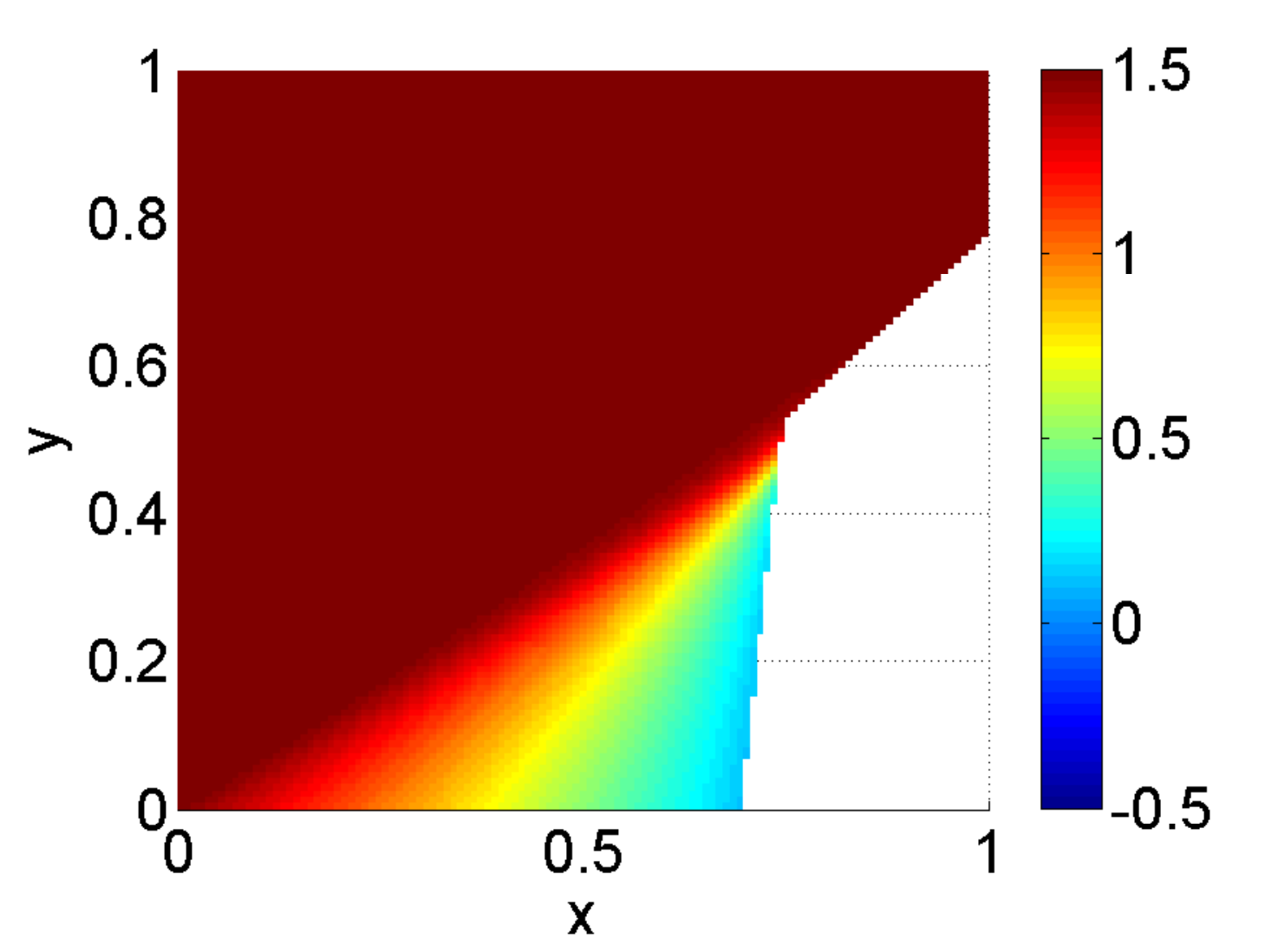}\label{fig:stupidL}}
  	\caption{\subref{fig:stupidB}~Bottom and \subref{fig:stupidL}~left solution branches for the 2D scalar example in~\S\ref{sec:2dscalar} computed on a $120\times120$ grid.}
  	\label{fig:2dscalar}  	
\end{figure}

\subsection{Two-Dimensional Systems}
We can apply the same sweeping procedure to a two-dimensional system
\bq\label{eq:2dsystem} f(U)_x + g(U)_y = a(U,x,y) \eq
with suitable boundary conditions.  For example, to sweep a solution from the left, we would solve the paraxial system
\bq\label{eq:paraxialSystem}
V_x + g\left(f^{-1}(U)\right)_y = a\left(f^{-1}(U),x,y\right),
\eq
using boundary conditions at $x=x_{min}$ as the ``initial condition''.  In this case, boundary conditions at the top and bottom may be specified for some, but not all, of the components of the solution vector.  When permitted by the direction of the characteristics, remaining boundary values at $y = y_{min},y_{max}$ can be updated via upwinding.

\subsubsection{2D Euler equations}\label{sec:euler}
We consider the 2D Euler equations
\bq\label{eq:euler} 
\left(\begin{tabular}{c}$\rho$\\$\rho u$\\$\rho v$\\$E$\end{tabular}\right)_t + \left(\begin{tabular}{c}$\rho u$\\$\rho u^2+p$\\$\rho u v$\\$u(E+p)$\end{tabular}\right)_x  + \left(\begin{tabular}{c}$\rho v$\\$\rho u v$\\$\rho v^2+p$\\$v(E+p)$\end{tabular}\right)_y = 0  
\eq
in the domain
\[ 0 \leq x \leq 4, 0 \leq y \leq 1.\]
Here $p = (\gamma-1)\left(E-\frac{1}{2}\rho(u^2+v^2)\right)$ and $\gamma = 1.4$.  

Following~\cite{Chen_LFSweeping,Shu_WENOHomotopy}, we enforce the boundary conditions
\[
(\rho, u, v, p) = 
\begin{cases}
(1.69997,2.61934,-0.50632, 1.528191) & y = 1\\
(1, 2.9, 0, 1/\gamma) & x = 0.
\end{cases}
\]
A reflection condition (i.e. $v=0$) is imposed at $y=0$ and no boundary conditions are given at $x=4$.

We can actually obtain the entire solution by sweeping once from the left boundary since all the eigenvalues 
of $\nabla f$ ($u-c, u, u, u+c$) are positive throughout.  The computed energy is shown in Figure~\ref{fig:euler}.  Computation times, which are presented in Table~\ref{table:euler}, demonstrate that even for a system, the computational complexity of the sweeping process is linear in the number of grid points.

\begin{table}[htdp]
\caption{Computation time on an $m\times m$ grid ($N=m^2$) for the 2D Euler equations in~\S\ref{sec:euler}.}
\begin{center}
\begin{tabular}{c||ccccc}
$m$  & 32 & 64 & 128 & 256 & 512 \\
\hline
CPU Time (s) & 3.0 & 7.4 & 25.4 & 92.7 & 354.1\\
\end{tabular}
\end{center}
\label{table:euler}
\end{table}

\begin{figure}[htdp]
	\centering
	{\includegraphics[width=\textwidth,trim=0 50 0 50,clip=true]{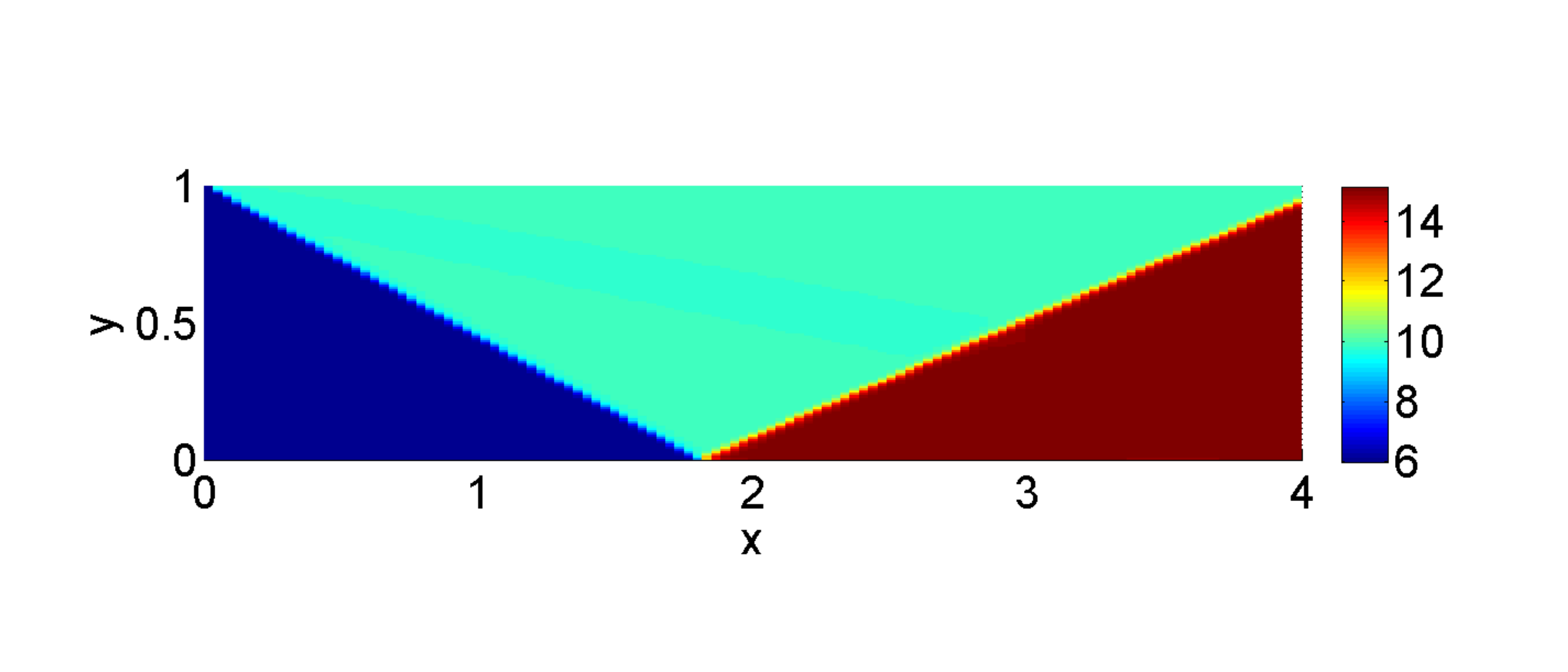}}
  	\caption{Computed energy for the 2D Euler shock reflection problem in~\S\ref{sec:euler} on a $120\times120$ grid.}
  	\label{fig:euler}  	
\end{figure}

\section{Conclusions}
In this article, we have introduced a fast sweeping approach for computing steady state solutions to systems of conservation laws.  Two of the biggest assets of this approach are its computational efficiency and ability to capture shocks sharply.  The methods can also be combined with the numerical flux of choice, can be used to solve problems with multiple steady states, and can solve problems that involve different types of boundary conditions.

\bibliographystyle{plain}
\bibliography{SweepingBib}

\begin{thebibliography}{10}

\bibitem{AbgrallMezine_Steady}
R.~Abgrall and M.~Mezine.
\newblock Construction of second-order accurate monotone and stable residual
  distribution schemes for steady problems.
\newblock {\em J. Comput. Phys.}, 195(2):474--507, 2004.

\bibitem{AbgrallRoe}
R.~Abgrall and P.~L. Roe.
\newblock High order fluctuation schemes on triangular meshes.
\newblock {\em J. Sci. Comput.}, 19(1-3):3--36, 2003.
\newblock Special issue in honor of the sixtieth birthday of Stanley Osher.

\bibitem{Bardi_HJB}
M.~Bardi and I.~Capuzzo-Dolcetta.
\newblock {\em Optimal control and viscosity solutions of
  {H}amilton-{J}acobi-{B}ellman equations}.
\newblock Systems \& Control: Foundations \& Applications. Birkh\"auser Boston
  Inc., Boston, MA, 1997.
\newblock With appendices by Maurizio Falcone and Pierpaolo Soravia.

\bibitem{Feldman}
G.-Q. Chen, J.~Chen, and M.~Feldman.
\newblock Transonic shocks and free boundary problems for the full {E}uler
  equations in infinite nozzles.
\newblock {\em J. Math. Pures Appl. (9)}, 88(2):191--218, 2007.

\bibitem{Chen_LFSweeping}
W.~Chen, C.-S. Chou, and C.-Y. Kao.
\newblock Lax-{F}riedrichs fast sweeping methods for steady state problems for
  hyperbolic conservation laws.
\newblock {\em Journal of Computational Physics}, 234(0):452 -- 471, 2013.

\bibitem{ChouShuWENO}
C.-S. Chou and C.-W. Shu.
\newblock High order residual distribution conservative finite difference
  {WENO} schemes for steady state problems on non-smooth meshes.
\newblock {\em J. Comput. Phys.}, 214(2):698--724, 2006.

\bibitem{Colella_MUSCL}
P.~Colella.
\newblock A direct {E}ulerian {MUSCL} scheme for gas dynamics.
\newblock {\em SIAM J. Sci. Statist. Comput.}, 6(1):104--117, 1985.

\bibitem{CourantFriedrichs}
R.~Courant and K.~O. Friedrichs.
\newblock {\em Supersonic {F}low and {S}hock {W}aves}.
\newblock Interscience Publishers, Inc., New York, N. Y., 1948.

\bibitem{CrandallLions_ViscHJ}
M.~G. Crandall and P.-L. Lions.
\newblock Viscosity solutions of {H}amilton-{J}acobi equations.
\newblock {\em Trans. Amer. Math. Soc}, 277(1):1--42, 1983.

\bibitem{EmbidMajda_Mult}
P.~Embid, J.~Goodman, and A.~Majda.
\newblock Multiple steady states for {$1$}-{D} transonic flow.
\newblock {\em SIAM J. Sci. Statist. Comput.}, 5(1):21--41, 1984.

\bibitem{Glimm_FrontTracking}
J.~Glimm, E.~Isaacson, D.~Marchesin, and O.~McBryan.
\newblock Front tracking for hyperbolic systems.
\newblock {\em Adv. in Appl. Math.}, 2(1):91--119, 1981.

\bibitem{GustafssonWahlund_SteadyFlow}
B.~Gustafsson and P.~Wahlund.
\newblock Finite-difference methods for computing the steady flow about blunt
  bodies.
\newblock {\em Journal of Computational Physics}, 36(3):327--346, 1980.

\bibitem{Shu_WENOHomotopy}
W.~Hao, J.~D. Hauenstein, C.-W. Shu, A.~J. Sommese, and Y.-T. Xu, Z.and~Zhang.
\newblock A homotopy method based on weno schmes for solving steady state
  problems of hyperbolic conservation laws.
\newblock 2013.
\newblock http://www4.ncsu.edu/~jdhauens/preprints/hhssxzWENO.pdf.

\bibitem{HartenENO}
A.~Harten, B.~Engquist, S.~Osher, and S.~R. Chakravarthy.
\newblock Uniformly high-order accurate essentially nonoscillatory schemes.
  {III}.
\newblock {\em J. Comput. Phys.}, 71(2):231--303, 1987.

\bibitem{HelmsenPuckett}
J.~J. Helmsen, E.~G. Puckett, P.~Colella, and M.~Dorr.
\newblock Two new methods for simulating photolithography development in 3d.
\newblock In {\em SPIE's 1996 International Symposium on Microlithography},
  pages 253--261. International Society for Optics and Photonics, 1996.

\bibitem{HuLiTang_Newton}
G.~Hu, R.~Li, and T.~Tang.
\newblock A robust {WENO} type finite volume solver for steady {E}uler
  equations on unstructured grids.
\newblock {\em Commun. Comput. Phys.}, 9(3):627--648, 2011.

\bibitem{JiangShuWENO}
G.-S. Jiang and C.-W. Shu.
\newblock Efficient implementation of weighted {ENO} schemes.
\newblock {\em J. Comput. Phys.}, 126(1):202--228, 1996.

\bibitem{JiangTadmor_CentralMultiD}
G.-S. Jiang and E.~Tadmor.
\newblock Nonoscillatory central schemes for multidimensional hyperbolic
  conservation laws.
\newblock {\em SIAM J. Sci. Comput.}, 19(6):1892--1917 (electronic), 1998.

\bibitem{KaoOsherTsai_Sweeping}
C.-Y. Kao, S.~Osher, and Y.-H. Tsai.
\newblock Fast sweeping methods for static {H}amilton-{J}acobi equations.
\newblock {\em SIAM J. Numer. Anal.}, 42(6):2612--2632, 2005.

\bibitem{Lax}
P.~D. Lax.
\newblock {\em Hyperbolic systems of conservation laws and the mathematical
  theory of shock waves}.
\newblock Society for Industrial and Applied Mathematics, Philadelphia, Pa.,
  1973.
\newblock Conference Board of the Mathematical Sciences Regional Conference
  Series in Applied Mathematics, No. 11.

\bibitem{LiuOsherChan_WENO}
X.-D. Liu, S.~Osher, and T.~Chan.
\newblock Weighted essentially non-oscillatory schemes.
\newblock {\em J. Comput. Phys.}, 115(1):200--212, 1994.

\bibitem{NessyahuTadmor_Central}
H.~Nessyahu and E.~Tadmor.
\newblock Nonoscillatory central differencing for hyperbolic conservation laws.
\newblock {\em J. Comput. Phys.}, 87(2):408--463, 1990.

\bibitem{OsherShu_ENO}
S.~Osher and C.-W. Shu.
\newblock High-order essentially nonoscillatory schemes for {H}amilton-{J}acobi
  equations.
\newblock {\em SIAM J. Numer. Anal.}, 28(4):907--922, 1991.

\bibitem{QianSymesParaxial03}
J.~Qian and W.~W. Symes.
\newblock A paraxial formulation for the viscosity solution of quasi-{P}
  eikonal equations.
\newblock {\em Comput. Math. Appl.}, 46(10-11):1691--1701, 2003.

\bibitem{Sethian_FastMarching}
J.~A. Sethian.
\newblock Fast marching methods.
\newblock {\em SIAM Rev.}, 41(2):199--235, 1999.

\bibitem{SethianVladimirsky_OUM}
J.~A. Sethian and A.~Vladimirsky.
\newblock Ordered upwind methods for static {H}amilton-{J}acobi equations.
\newblock {\em Proc. Natl. Acad. Sci. USA}, 98(20):11069--11074, 2001.

\bibitem{Shu_ENO}
C.-W. Shu.
\newblock Essentially non-oscillatory and weighted essentially non-oscillatory
  schemes for hyperbolic conservation laws.
\newblock In {\em Advanced numerical approximation of nonlinear hyperbolic
  equations ({C}etraro, 1997)}, volume 1697 of {\em Lecture Notes in Math.},
  pages 325--432. Springer, Berlin, 1998.

\bibitem{ShubinStephens_SteadyShock}
G.~R. Shubin, A.~B. Stephens, H.~M. Glaz, A.~B. Wardlaw, and L.~B. Hackerman.
\newblock Steady shock tracking, {N}ewton's method, and the supersonic blunt
  body problem.
\newblock {\em SIAM J. Sci. Statist. Comput.}, 3(2):127--144, 1982.

\bibitem{TsaiChenOshwerZhao_Sweeping}
Y.-H.~R. Tsai, L.-T. Cheng, S.~Osher, and Hong-Kai Zhao.
\newblock Fast sweeping algorithms for a class of {H}amilton-{J}acobi
  equations.
\newblock {\em SIAM J. Numer. Anal.}, 41(2):673--694, 2003.

\bibitem{vanLeer}
B.~van Leer.
\newblock Towards the ultimate conservative difference scheme. v. a
  second-order sequel to godunov's method.
\newblock {\em Journal of computational Physics}, 32(1):101--136, 1979.

\bibitem{ZhaoSweeping}
H.~Zhao.
\newblock A fast sweeping method for eikonal equations.
\newblock {\em Math. Comp.}, 74(250):603--627, 2005.

\bibitem{ZhengBook}
Y.~Zheng.
\newblock {\em Systems of conservation laws}.
\newblock Progress in Nonlinear Differential Equations and their Applications,
  38. Birkh\"auser Boston Inc., Boston, MA, 2001.
\newblock Two-dimensional Riemann problems.

\end{thebibliography}

\end{document}